\let\openright=\clearpage
\setlist[enumerate]{label={\rm(\roman*)}}
\newtheorem{theorem}{Theorem}
\newtheorem{corollary}[theorem]{Corollary}
\newtheorem{proposition}[theorem]{Proposition}
\newtheorem{lemma}[theorem]{Lemma}
\theoremstyle{definition}
\newtheorem{remark}[theorem]{Remark}
\numberwithin{theorem}{chapter}
\def\ManuscriptTitle{Optimality of embeddings in Orlicz Spaces}
\def\ManuscriptAuthor{Tomáš Beránek}
\def\Abstract{%
Working with function spaces in various branches of mathematical analysis introduces optimality problems, where the question of choosing a function space both accessible and expressive becomes a nontrivial exercise. A good middle ground is provided by Orlicz spaces, parameterized by a single Young function and thus accessible, yet expansive. In this work, we study optimality problems on Sobolev embeddings in the so-called Maz'ya classes of Euclidean domains which are defined through their isoperimetric behavior. In particular, we prove the non-existence of optimal Orlicz spaces in certain Orlicz--Sobolev embeddings in a limiting (critical) state whose pivotal special case is the celebrated embedding of Brezis and Wainger for John domains.
}
\def\Keywords{%
    integral operator, Orlicz space, optimality, Sobolev embedding.
}
\def\@makechapterhead#1{
  {\parindent \z@ \raggedright \normalfont
   \Huge\bfseries \thechapter. #1
   \par\nobreak
   \vskip 20\p@
}}
\def\@makeschapterhead#1{
  {\parindent \z@ \raggedright \normalfont
   \Huge\bfseries #1
   \par\nobreak
   \vskip 20\p@
}}
\theoremstyle{plain}
\theoremstyle{plain}
\theoremstyle{remark}
\newtheorem*{example}{Example}
\begin{document}

{\bfseries\MakeUppercase{\ManuscriptTitle}}
\\

\vbox to 0.5\vsize{
\setlength\parindent{0mm}
\setlength\parskip{5mm}

Author:
\ManuscriptAuthor

Email address: tomas.beranek@matfyz.cuni.cz

Institution:
Department of Mathematical Analysis, 
Faculty of Mathematics and Physics, Charles University

Address of Institution:
Sokolovská 83, 186 75 Praha 8 

Abstract:
\Abstract

Keywords:
\Keywords

Acknowledgement: 
The research was partially supported by grant no. 23-04720S of the Czech Science Foundation. 

\vss}

\chapter*{Introduction}
\addcontentsline{toc}{chapter}{Introduction}

When tasked with creating a mathematical model of a given scenario, one approach is to interpret the input data and output solutions as measurable functions, which belong to some function spaces, and the assignment of data to a solution as an operator between these spaces. We then obtain the abstract model
\begin{equation}\label{E:map}
    T\colon X \rightarrow Y,
\end{equation}
where $X$ and $Y$ are function spaces containing the data and solutions, respectively, and $T$ is a linear operator mapping the data to its solution. For example, in economics, the problem of increasing one's capital in the stock market can be modelled as the relation \eqref{E:map}, where $X$ is the behaviour of the market, $Y$ are possible decisions of a trader (purchases or sales), and $T$ maps the current state of the market to the best possible decision. Numerous examples of such modelling in physics, biology and other fields follow naturally.

One of the most important questions when working with such models is, what can we say about the quality of the solutions based on the quality of the input data. This is the motivation behind using different function spaces, as we group together functions with the same quality into one space. At this point, it is natural to ask whether we may find an optimal space for a given problem. There are two forms of such a task: either we are given data and search for the smallest group of possible solutions (= given space $X$ and finding the smallest space $Y$), or we are given conditions on the solutions and search for the largest amount of data whose solutions satisfy such demands (= given space $Y$ and finding the largest space $X$).

When solving optimality problems, it is important to remember that we also have to consider the properties of the chosen classes of function spaces in terms of accessibility and expressivity. For instance, Lebesgue $L^p$ spaces are a very well understood and accessible class of spaces (as they are described by a single parameter), though the spaces may be too sparse to provide accurate enough information, and are as such not expressive enough for certain needs. On the other side of the spectrum sit rearrangement-invariant spaces, where an optimal space virtually always exists, however, the optimal space is described implicitly, and is as such practically impossible to work with. A good middle ground is provided by Orlicz spaces, a class of function spaces described by Young functions. As such, these spaces provide both accessibility and expressivity.

Optimality problems are not a new discipline - the earlier results go back into the 2nd half of the 20th century, but it was not until the turn of the millennium that they saw a boom in interest. As such, the field is now supported by a vast amount of literature, which includes the works \cite{HeMoTr70}, \citet{Ad75}, \citet{Ta76}, \citet{BrWa80}, \citet{Ci96}, \citet{CuRi02}, \citet{Vy07}, \citet{FoMo14}, \citet{ClSo16}, \citet{NeOp20}, and \citet{Ho21}.

We focus on the optimal form of Sobolev embeddings within rearrangement-invariant spaces, and within Orlicz spaces, on Maz'ya classes of Euclidean domains. Using the general theory we introduce, we prove the nonexistence of certain optimal Orlicz spaces in Orlicz--Sobolev embeddings, namely that there is no largest domain Orlicz space in the embedding
\begin{equation*}
    W^mL^A(\Omega) \hookrightarrow L^{\infty, q, -1+m(1-\alpha) - \frac{1}{q}}(\Omega)
\end{equation*}
for appropriate values of the parameters involved.

The motivation for studying embeddings into this particular space stems from the fact that it is the optimal (smallest) rearrangement-invariant space which renders the embedding true for every $\Omega\in \mathcal J_{\alpha}$ in the very important case when the corresponding domain Orlicz space $L^A$ is the critical (limiting) Lebesgue space $L^{\frac{1}{m(1-\alpha)}}$. This was first observed by~\cite{BrWa80} in connection with the special case $\alpha=\frac{1}{n'}$, that is, $L^{\frac{1}{m(1-\alpha)}}=L^{\frac nm}$, where $n$ is the dimension of the underlying ambient Euclidean space and $m$ is the order of differentiation. There are various reasons for establishing results involving such Lorentz-type refinements on the target side, perhaps the most notable one being the fact that when such embeddings are considered, then no loss of information occurs under their iterations (for more details, see~\citet{CiPiSl15} and the references therein).

We shall now describe our principal results in detail. We first establish a general formula for the fundamental function of an operator-induced space based on the isoperimetric behavior of the underlying domain under certain mild assumptions. Next, we consider a specific situation concerning Maz'ya domains. We then establish a theorem which enables us to transfer the information of nonexistence of an optimal Orlicz domain partner space from a Marcinkiewicz space to any space on the same fundamental level. We finally apply this general scheme to a limiting Sobolev embedding. We thereby obtain a wide variety of results applicable to any Maz'ya domain having isoperimetric profile $t^{\alpha}$ with $\alpha\in[\frac{1}{n'},1)$ whose particular case for Lipschitz domains was obtained in the earlier work~\cite{MuPiTa22}.  

The text is structured as follows. In Chapter 1, we present definitions and basic knowledge about the relevant function spaces and the isoperimetric function. In Chapter 2, we collect background results necessary for proofs of our main results, namely the principal alternative and reduction principle for Sobolev embeddings, and the forms of optimal r.i.~domain and target spaces. Lastly, in Chapter 3, we present the main results of this work. 

\chapter{Preliminaries}
\section{Function spaces} \label{S: Function spaces}

In this section, we recall some definitions and basic facts from the theory of various function spaces. For further details, the standard reference is \citet{BS}.

Let $n \in \mathbb{N}$, $n \geq 2$. In this work, $|E|$ denotes the $n$-dimensional Lebesgue measure of $E$ for $E \subset \mathbb{R}^n$ measurable. We use the convention that $\frac{1}{\infty} = 0$ and $0 \cdot \infty = 0$.

Let $\Omega \subset \mathbb{R}^n$ be a bounded open set. We assume, without loss of generality, that $\lvert \Omega \rvert = 1$, and define
\begin{equation*}
    \mathcal{M}(\Omega) = \{ f\colon\Omega \rightarrow [ -\infty,\infty ];~f~ \mbox{is~Lebesgue-measurable~in~}(0,\infty)\},
\end{equation*}
\begin{equation*}
    \mathcal{M}_+(\Omega) = \{ f \in \mathcal{M}(\Omega)\colon f \geq 0 \mbox{ a.e.}\},
\end{equation*}
and
\begin{equation*}
    \mathcal{M}_0(\Omega) = \{ f \in \mathcal{M}(\Omega)\colon f~\mathrm{is~finite~a.e.~in~}\Omega \}. 
\end{equation*}

The \emph{distribution~function}~$f_*\colon(0, \infty) \rightarrow [0, \infty]$ of a function $f \in \mathcal{M}(\Omega)$ is defined as
\begin{equation*}
    f_*(s) = \lvert \{ x \in \Omega \colon \lvert f(x) \rvert > s \} \rvert \quad \mathrm{for}~ s \in (0,\infty),
\end{equation*}
and the 
\emph{non-increasing~rearrangement}~$f^*\colon [0,1] \rightarrow [0, \infty]$ of a function $f \in \mathcal{M}(\Omega)$ is defined as
\begin{equation*}
    f^*(t) = \inf{\{ s \in (0, \infty)\colon f_*(s) \leq t \} } \quad \mathrm{for}~ t \in [0,1].
\end{equation*}
The operation $f \rightarrow f^*$ is monotone in the sense that for $f_1,f_2 \in \mathcal{M}(\Omega)$,
\begin{equation*}
    \lvert f_1 \rvert \leq \lvert f_2 \rvert \quad \mathrm{a.e.~in~}\Omega \implies f_1^* \leq f_2^* \quad \mathrm{in}~ [0,1].
\end{equation*}
The \emph{elementary~maximal~function}~$f^{**}\colon(0,1] \rightarrow [0,\infty]$ of a function $f \in \mathcal{M}(\Omega)$ is defined as
\begin{equation*}
    f^{**}(t) = \frac{1}{t} \int_0^t f^*(s) \, \mathrm{d}s \quad \mathrm{for}~ t \in (0,1].
\end{equation*}
The operation $f \rightarrow f^{**}$ is subadditive in the sense that for $f_1,f_2 \in \mathcal{M^{+}}(\Omega)$,
\begin{equation*}
    (f_1+f_2)^{**} \leq f_1^{**} + f_2^{**}.
\end{equation*}

The \emph{Hardy--Littlewood~inequality} is a classical property of function rearrangements, which asserts that, for $f_1, f_2 \in \mathcal{M}(\Omega)$,
\begin{equation}\label{H-L}
    \int_{\Omega} \lvert f_1(x)f_2(x)\rvert \, \mathrm{d}x \leq \int_0^1 f_1^*(t)f_2^*(t) \, \mathrm{d}t.
\end{equation}
A specialization of the inequality states that for every $f \in \mathcal{M}(\Omega)$ and for every $E \subset \Omega$ measurable,
\begin{equation*}
    \int_{E} \lvert f(x)\rvert \, \mathrm{d}x \leq \int_0^{\lvert E \rvert} f^*(t) \, \mathrm{d}t.
\end{equation*}

Next, we define the rearrangement-invariant norm. We say that a functional
\begin{equation*}
    \| \cdot \|_{X(0,1)} \colon \mathcal{M}_+(0,1) \rightarrow [0, \infty] 
\end{equation*}
is a \emph{function~norm}, if for all $f, g$ and $\{ f_j \}_{j \in \mathbb{N}}$ in $\mathcal{M}_+(0,1)$, and every $\lambda \geq 0$, the following properties hold:
\begin{enumerate}[label=(P\arabic*),itemindent=*]
    \item\label{(P1)}$\| f \|_{X(0,1)} = 0 \iff f = 0$ ~a.e.,
        
        $\| \lambda f \|_{X(0,1)} = \lambda \| f \|_{X(0,1)}$,

        $\| f+g \|_{X(0,1)} \leq \| f \|_{X(0,1)} + \| g \|_{X(0,1)} $;
    \item\label{(P2)}$f \leq g$ a.e.~$\implies \| f \|_{X(0,1)} \leq \| g \|_{X(0,1)}$;
    \item\label{(P3)}$f_j \nearrow f$ a.e.~$\implies \| f_j \|_{X(0,1)} \nearrow \| f \|_{X(0,1)}$;
    \item\label{(P4)}$\| 1 \|_{X(0,1)} < \infty$;
    \item\label{(P5)}$\int_0^1 f(x) \, \mathrm{d}x \leq c \cdot \| f \|_{X(0,1)}$ for some constant $c$ independent of $f$.
\end{enumerate}
If, in addition, the property
\begin{enumerate}[label=(P6)]
    \item\label{(P6)}$\| f \|_{X(0,1)} = \| g \|_{X(0,1)} $ if $ f^* = g^*$ 
\end{enumerate}
holds, we call the functional $\| \cdot \|_{X(0,1)}$ a \emph{rearrangement-invariant~function~norm}.
For any such rearrangement-invariant function norm $\| \cdot \|_{X(0,1)}$, we define the functional $\| \cdot \|_{X'(0,1)}$ as
\begin{equation*}
    \| f \|_{X'(0,1)} = \sup \big\{ \int_0^1 f(t)g(t) \, \mathrm{d}t \colon g \in \mathcal{M}_+(0,1), \| g \|_{X(0,1)} \leq 1 \big\}
\end{equation*}
for $f \in \mathcal{M}_+(0,1)$. The functional $\| \cdot \|_{X'(0,1)}$ is then also a rearrangement-invariant function norm, see \cite[Chapter~1,~Theorem~2.2]{BS}, and it is called the \emph{associate~function~norm} of $\| \cdot \|_{X(0,1)}$ and, by \cite[Chapter~1,~Theorem~2.7]{BS} it holds that $\| \cdot \|_{X''(0,1)} = \| \cdot \|_{X(0,1)}$. We say that $\| \cdot \|_{X(0,1)}$ is a \emph{rearrangement-invariant function quasinorm}, if it satisfies the conditions \ref{(P2)}, \ref{(P3)}, \ref{(P4)} and \ref{(P6)}, and \ref{(Q1)}, a weaker version of \ref{(P1)}, where
\begin{enumerate}[label=(Q\arabic*),itemindent=*]
    \item\label{(Q1)}$\| f \|_{X(0,1)} = 0 \iff f = 0$ ~a.e.,
        
        $\| \lambda f \|_{X(0,1)} = \lambda \| f \|_{X(0,1)}$,

        There exists$  ~c \in (0, \infty) ~ \mbox{such~that} ~ \| f+g \|_{X(0,1)} \leq c \cdot ( \| f \|_{X(0,1)} + \| g \|_{X(0,1)} )$,
\end{enumerate}
for all $f,g \in \mathcal{M}_+(0,1)$ and every $\lambda \geq 0$.

Given a rearrangement-invariant function norm $\| \cdot \|_{X(0,1)}$, we define the functional $\| \cdot \|_{X(\Omega)}$ as
\begin{equation*}
    \| f \|_{X(\Omega)} = \| f^* \|_{X(0,1)} \quad \mathrm{for}~f \in \mathcal{M}(\Omega),
\end{equation*}
and we call the set
\begin{equation*}
    X(\Omega) = \{ f \in \mathcal{M}(\Omega)\colon \| f \|_{X(\Omega)} < \infty \} 
\end{equation*}
a \emph{rearrangement-invariant~space}. Furthermore, the space $X(0,1)$ is called the \emph{representation~space} of $X(\Omega)$, and we define the \emph{associate~space}~$X'(\Omega)$ of $X(\Omega)$ as
\begin{equation*}
    X'(\Omega) = \{ f \in \mathcal{M}(\Omega)\colon \| f \|_{X'(\Omega)} < \infty \}.
\end{equation*}
Then, the \emph{H\"older~inequality}
\begin{equation*}
    \int_{\Omega} |f(x)g(x)| \, \mathrm{d}x \leq \| f \|_{X(\Omega)} \| g \|_{X'(\Omega)}
\end{equation*}
holds for every $f \in X(\Omega)$ and $g \in X'(\Omega)$. 

For any rearrangement-invariant spaces $X(\Omega)$ and $Y(\Omega)$, the continuous embedding of $X(\Omega)$ into $Y(\Omega)$ is denoted by $X(\Omega) \hookrightarrow Y(\Omega)$ and means that there exists a constant $c > 0$ such that for any $f \in X(\Omega)$, it holds that $f \in Y(\Omega)$ and $\|f\|_{Y(\Omega)} \leq c \cdot \|f\|_{X(\Omega)}$. By \cite[Chapter~1,~Proposition~2.10]{BS} it holds that
\begin{equation*}
    X(\Omega) \hookrightarrow Y(\Omega) \iff Y'(\Omega) \hookrightarrow X'(\Omega),    
\end{equation*}
and by \cite[Chapter~1,~Theorem~1.8]{BS}, it holds that
\begin{equation*}
    X(\Omega) \subset Y(\Omega) \implies X(\Omega) \hookrightarrow Y(\Omega).
\end{equation*}

Note that the functional $\| \cdot \|_{X(\Omega)}$ may also be defined if its corresponding functional $\| \cdot \|_{X(0,1)}$ is only a rearrangement-invariant quasinorm. However, some of the properties listed here for the case where $\| \cdot \|_{X(0,1)}$ is a rearrangement-invariant norm then do not necessarily hold.

Occasionally, when no confusion can arise, we will, for simplicity's sake, omit the underlying domain in the notation, more precisely, we will write $X$ in place of $X(\Omega)$ or $X(0,1)$, etc.

For given rearrangement-invariant spaces $X$ and $Y$, we denote the boundedness of an operator $T$ from $X$ to $Y$ by $T\colon X \rightarrow Y$.

Let $\lambda > 0$. For any $f \in \mathcal{M}(0,1)$, the \emph{dilation~operator}~$E_{\lambda}$ is defined as
\begin{equation*}
    E_{\lambda}f(t) = \left\{
	\begin{array}{ll}
		f(\tfrac{t}{\lambda})  & \mathrm{if}~ 0 < t \leq \lambda \\
	    0  & \mathrm{if}~ \lambda < t \leq 1.
	\end{array}
    \right.
\end{equation*}
Such an operator is bounded on any rearrangement-invariant space $X(0,1)$, with its norm smaller than or equal to max\{$1, \tfrac{1}{\lambda}$\}.

We introduce, for $\gamma \in (0,1)$, the operator
\begin{equation}\label{E:sup-operator}
    T_\gamma \, g(t) = t^{-\gamma} \sup_{s \in [t,1]} s^\gamma g^*(s), \quad g \in \mathcal{M}(0,1),~t \in (0,1).
\end{equation}

Given any $f_1, f_2 \in \mathcal{M}_+(0,1)$ such that
\begin{equation*}
    \int_0^t f_1(s) \, \mathrm{d}s \leq \int_0^t f_2(s) \, \mathrm{d}s \quad \mathrm{for~every~} t \in (0,1),
\end{equation*}
by \emph{Hardy's~lemma} the inequality
\begin{equation*}
    \int_0^1 f_1(s)h(s) \, \mathrm{d}s \leq \int_0^1 f_2(s)h(s) \, \mathrm{d}s
\end{equation*}
holds for every non-increasing function $h\colon (0,1) \rightarrow [0,\infty]$. Consequently, the \emph{Hardy--Littlewood--P\'olya~principle}, which asserts that if $g_1, g_2 \in \mathcal{M}(\Omega)$ satisfy
\begin{equation*}
    \int_0^t g^*_1(s) \, \mathrm{d}s \leq \int_0^t g_2^*(s) \, \mathrm{d}s \quad \mathrm{for~every~} t \in (0,1),
\end{equation*}
then
\begin{equation*}
    \|g_1\|_{X(\Omega)} \leq \|g_2\|_{X(\Omega)}
\end{equation*}
holds for every rearrangement-invariant space $X(\Omega)$.

The \emph{fundamental~function}~$\varphi_X\colon [0,1] \rightarrow [0,1]$ of a rearrangement-invariant space $X(\Omega)$ is defined as
\begin{equation*}
    \varphi_X(t) = \|\chi_{E}\|_{X(\Omega)} \quad \mathrm{for}~ t \in [0,1],
\end{equation*}
where $E\subset \Omega$ is measurable and such that $|E|=t$. Thanks to the rearrangement invariance of $\|\cdot\|_{X(\Omega)}$, the function $\varphi_X$ is well defined. By \cite[Proposition~1.1]{Sh72}, the fundamental function is locally absolutely continuous on $(0,1]$. We define the \emph{fundamental~level} as the collection of all rearrangement-invariant spaces, which share the same fundamental function.

We say that a function $\varphi\colon [0,\infty) \rightarrow [0,\infty)$ is \emph{quasiconcave}, if $\varphi(t) = 0$ if and only if $t = 0$, it is positive and non-decreasing on $(0,\infty)$, and the function $\frac{t}{\varphi(t)}\colon (0,\infty) \rightarrow (0, \infty)$ is non-decreasing. Recall that by \cite[Chapter~2~Corollary~5.3]{BS} for any rearrangement-invariant space $X(\Omega)$, its fundamental function $\varphi_X$ is quasiconcave. Furthermore, by \cite[Chapter~2~Proposition~5.10]{BS} it holds that for any quasiconcave function $\varphi\colon [0,\infty) \rightarrow [0,\infty)$ there exists a concave function $\overline{\varphi}\colon [0,\infty) \rightarrow [0,\infty)$ such that for every $t \in [0, \infty),$ the inequality $\tfrac{1}{2}\overline{\varphi}(t) \leq \varphi(t) \leq \overline{\varphi}(t)$ holds.

Let $\varphi\colon [0,\infty) \rightarrow [0,\infty)$ be a quasiconcave function and let $\overline{\varphi}\colon [0,\infty) \rightarrow [0,\infty)$ be a concave function such that $\tfrac{1}{2}\overline{\varphi}(t) \leq \varphi(t) \leq \overline{\varphi}(t)$ for every $t \in [0, \infty)$. We then define the functionals
\begin{equation*}
     \|f\|_{\Lambda_\varphi} = \int_0^\infty f^*(t)\,\mathrm{d}\overline{\varphi}(t), \quad f \in \mathcal{M}(\Omega),
\end{equation*}
and
\begin{equation*}
    \|f\|_{M_\varphi} = \sup_{t \in (0, \infty)} \varphi(t)f^{**}(t), \quad f \in \mathcal{M}(\Omega).
\end{equation*}
By \cite[Chapter~2,~Theorem~5.13]{BS}, these functionals are rearrangement-invariant function norms, and as such, we define the corresponding rearrangement-invariant spaces $\Lambda_\varphi$, and $M_\varphi$. Both of these spaces have the same fundamental function equivalent to $\varphi$. Furthermore, the space $\Lambda_\varphi$ is the smallest rearrangement-invariant space with the fundamental function $\overline{\varphi}$, while $M_\varphi$ is the largest rearrangement-invariant space with the fundamental function $\varphi$.

Given a rearrangement-invariant space $X$, we define the corresponding \emph{Lorentz space} $\Lambda(X) = \Lambda_{\varphi_X}$ and \emph{Marcinkiewicz space} $M(X) = M_{\varphi_X}$. Let us now recall the Lorentz--Marcinkiewicz sandwich

\begin{equation} \label{E:Lorentz-Marcinkiewicz-Sandwich}
	\Lambda(X) \hookrightarrow X \hookrightarrow M(X).
\end{equation}

We recall the embeddings into Lebesgue spaces for any rearrangement-invariant space $X(\Omega)$
\begin{equation*}
    L^\infty(\Omega) \hookrightarrow X(\Omega) \hookrightarrow L^1(\Omega).
\end{equation*}
Let $1 \leq p,q \leq \infty $. The functionals $\| \cdot \|_{L^{p,q}(0,1)}$ and $\| \cdot \|_{L^{(p,q)}(0,1)}$ are respectively defined as
\begin{equation*}
\begin{split}
    & \| f \|_{L^{p,q}(0,1)} = \|t^{\frac{1}{p}-\frac{1}{q}}f^*(t)\|_{L^q(0,1)} \quad \mathrm{and} \\
    & \| f \|_{L^{(p,q)}(0,1)} = \|t^{\frac{1}{p}-\frac{1}{q}}f^{**}(t)\|_{L^q(0,1)}
\end{split}
\end{equation*}
for $f \in \mathcal{M}_+(0,1)$. Let us recall that if $1 < p \leq \infty$,
\begin{equation*}
    L^{p,q}(\Omega) = L^{(p,q)}(\Omega),
\end{equation*}
and if one of the conditions
\begin{enumerate}[label=(L\arabic*),itemindent=*]
    \item $1 < p < \infty, ~1 \leq q \leq \infty$,
    \item $p = q = 1$,
    \item $p = q = \infty$,
\end{enumerate}
is met, then $\| \cdot \|_{L^{p,q}(0,1)}$ is equivalent to a rearrangement-invariant function norm. Then, the corresponding rearrangement-invariant function space $L^{p,q}(\Omega)$ is called a \emph{Lorentz~space}.

Let $p \in [1,\infty]$. Then $L^p(\Omega) = L^{p,p}(\Omega)$, and
\begin{equation*}
    1 \leq r \leq s \leq \infty \implies L^{p,r}(\Omega) \hookrightarrow L^{p,s}(\Omega),
\end{equation*}
where, if $p < \infty$, equality of the spaces is attained if and only if $r = s$. Note that by equality of two rearrangement-invariant spaces $X$ and $Y$, we mean that $X$ and $Y$ coincide in set-theoretical sense, and, moreover, that their norms are equivalent in the sense that there exists a constant $c > 0$ such that
\begin{equation*}
    c^{-1} \cdot \| f \|_X \leq \|f\|_Y \leq c \cdot \|f\|_X
\end{equation*}
for every $f \in X$.

Let $1 \leq p,q \leq \infty,~ \alpha \in \mathbb{R}$. The functionals $\| \cdot \|_{L^{p,q;\alpha}(0,1)}$ and $\| \cdot \|_{L^{(p,q;\alpha)}(0,1)}$ are defined as
\begin{equation*}
\begin{split}
    & \| f \|_{L^{p,q;\alpha}(0,1)} = \|t^{\frac{1}{p}-\frac{1}{q}} \, \log ^{\alpha}\big( \tfrac{2}{t}\big) f^*(t)\|_{L^q(0,1)} \quad \mathrm{and} \\
    & \| f \|_{L^{(p,q;\alpha)}(0,1)} = \|t^{\frac{1}{p}-\frac{1}{q}} \, \log ^{\alpha}\big( \tfrac{2}{t}\big) f^{**}(t)\|_{L^q(0,1)}
\end{split}
\end{equation*}
for $f \in \mathcal{M}_+(0,1)$. If one of the conditions
\begin{enumerate}[label=(Z\arabic*),itemindent=*]
    \item\label{(Z1)}$1 < p < \infty$, $1 \leq q\leq \infty$, $\alpha \in \mathbb{R}$,
    \item\label{(Z2)}$p = 1,~ q = 1,~ \alpha \geq 0$,
    \item\label{(Z3)}$p = \infty,~ q = \infty,~ \alpha \leq 0$,
    \item\label{(Z4)}$p = \infty, ~1 \leq q < \infty, ~\alpha + \tfrac{1}{q} < 0$,
\end{enumerate}
is met, then $\| \cdot \|_{L^{p,q;\alpha}(0,1)}$ is equivalent to a rearrangement-invariant function norm. Then the corresponding rearrangement-invariant function space $L^{p,q; \alpha}(\Omega)$ is called a \emph{Lorentz--Zygmund}~space.

We say that $A\colon [0,\infty) \rightarrow [0,\infty]$ is a \emph{Young~function}, if it is a convex non-constant left-continuous function such that $A(0) = 0$. Let us recall that any such function may be written in the integral form
\begin{equation*}
    A(t) = \int_0^t a(s) \mathrm{d}s \quad \mathrm{for}~ t \geq 0,
\end{equation*}
where $a\colon[0,\infty) \rightarrow [0,\infty]$ is a non-decreasing, left-continuous function, which is not identically $0$ or $\infty$.

The \emph{Luxemburg~function~norm} is defined as
\begin{equation*}
    \|f\|_{L^A(0,1)} = \inf\left\{\lambda > 0\colon \int_0^1 A\left( \frac{f(s)}{\lambda} \right) \mathrm{d}s \leq 1 \right\} \quad \mathrm{for}~ f \in \mathcal{M_+}(0,1).
\end{equation*}
By \cite[Chapter~4, Theorem~8.9]{BS}, the Luxemburg function norm $\| \cdot \|_{L^A(0,1)}$ is a rearrangement-invariant function norm. The \emph{Orlicz~space} $L^A(\Omega)$ is defined as the rearrangement-invariant space associated with the Luxemburg function norm. Then, for some $p \in [1,\infty)$ and $A(t) = t^p$, $L^A(\Omega) = L^p(\Omega)$; and for $B(t) = \infty \cdot \chi_{(1,\infty)}(t)$, $L^B(\Omega) = L^{\infty}(\Omega)$.

Let $A$ and $B$ be Young functions. We say that A dominates B near infinity if there exist constants $c>0$ and $t_0>0$ such that
\begin{equation*}
    B(t) \leq A(ct) \quad \mathrm{for}~t\geq t_0.
\end{equation*}
We say that A and B are equivalent near infinity if they dominate each other near infinity. Furthermore, it holds that
\begin{equation*}
    L^A(\Omega) \hookrightarrow L^B(\Omega) \iff A ~\mathrm{dominates}~ B ~ \mathrm{near~infinity.}
\end{equation*}
We denote certain Orlicz spaces without explicitly defining the corresponding Young functions. The Orlicz space associated with a Young function equivalent near infinity to $t^p\log ^\alpha t$, where $p>1$ and $\alpha \in \mathbb{R}$, or $p=1$ and $\alpha \geq 0$, is denoted by  $L^p\log ^\alpha L(\Omega)$, and the Orlicz space associated with a Young function equivalent near infinity to $e^{t^\beta}$, where $\beta > 0$, is denoted by exp$\,L^\beta (\Omega)$. 

In certain cases, the classes of Lorentz-Zygmund and Orlicz spaces overlap. If either $1 < p < \infty$, $\alpha \in \mathbb{R}$, or $p = 1$, $\alpha \geq 0$, then $L^{p,p;\alpha}(\Omega) = L^p\log ^{p\alpha}L(\Omega) $. Additionally, if $\beta > 0$, then 
\begin{equation}\label{Orlicz-exp}
    L^{\infty, \infty;-\beta}(\Omega) = \mathrm{exp}\,L^{\frac{1}{\beta}}(\Omega).
\end{equation}

For certain classes of function spaces, their fundamental functions are known. By \citet{OpPi99}, it holds that
\begin{equation*}
    \varphi_{L^p}(t) = \left\{
    \begin{array}{ll}
        t^{\frac{1}{p}} & \mbox{if~} p < \infty \\
        1 & \mbox{if~} p = \infty,
    \end{array}
    \right. 
\end{equation*}
\begin{equation*}
    \varphi_{L^{p,q}} = \varphi_{L^p} \quad \mbox{for all } p,q,
\end{equation*}
and
\begin{equation*}
    \varphi_{L^{p,q; \alpha}} \approx \left\{
    \begin{array}{ll}
        t^\frac{1}{p} \log ^\alpha \tfrac{2}{t} & \mbox{if~conditions~\ref{(Z1)}~or~\ref{(Z2)}~are~met} \\
        (\log \tfrac{2}{t})^{\alpha + \frac{1}{q}} & \mbox{if~conditions~\ref{(Z3)}~or~\ref{(Z4)}~are~met}.
    \end{array}
    \right. 
\end{equation*}

Furthermore, by \cite[Example 7.9.4 (iv), p. 260]{PKJF}, it holds that
\begin{equation} \label{Orlicz-fundamental_formula}
    \varphi_{L^A}(t) = \frac{1}{A^{-1}(\frac{1}{t})}.    
\end{equation}
Therefore, for every fundamental level of rearrangement-invariant spaces, there exists a unique Orlicz space with the same fundamental function. This \emph{fundamental Orlicz space} is denoted by $L(X)$, where $X$ is a rearrangement-invariant space.

Let $h_1$, $h_2 \in \mathcal{M}^+(\Omega)$. The fact that there exists a positive constant $c$ such that $h_1(x) \leq c \cdot h_2(x)$, or $h_1(x) \geq c \cdot h_2(x)$ for any $x \in \Omega$ is denoted by $h_1 \lesssim h_2$ or $h_1 \gtrsim h_2$, respectively. If both inequalities $h_1 \lesssim h_2$ and $h_2 \gtrsim h_1$ hold, we write $h_1 \approx h_2$.

Let $A$, $B \colon (0, \infty) \rightarrow (0, \infty)$. By $A(t) \approx B(t)$ for $t \gg 1$, we denote that there exist constants $c$, $t_0 > 1$ such that $c^{-1} \cdot A(t) \leq B(t) \leq c \cdot A(t)$ for every $t > t_0$.

We say that $A\colon (0, \infty) \rightarrow (0,\infty)$ satisfies the $\Delta_2$ condition, if it is non-decreasing and the inequality $A(2t) \lesssim A(t)$ holds for every $t \in (0,\infty)$. Then, by \citet[Theorem 4.7.3]{PKJF} it holds that
\begin{equation*}
    f \in L^A(\Omega) \iff \int_\Omega A(f) < \infty \quad \mbox{for~any~} f \in \mathcal{M}_+(\Omega).
\end{equation*}

\section{Isoperimetric functions and Sobolev spaces}

In this section, we shall define some basic terms and recall simple facts concerning the isoperimetric function and Sobolev spaces.

Let $n,~\Omega$ be as in Section \ref{S: Function spaces}. Let $n' = \frac{n}{n-1}.$ We define the \emph{perimeter} $P(E,\Omega)$, of a Lebesgue-measurable set $E \subset \Omega$ as
\begin{equation*}
    P(E,\Omega) = \int_{\Omega \cap \partial^M E} \mathrm{d} \mathcal{H}^{n-1}(x),
\end{equation*}
where $\partial^ME$ denotes the essential boundary of $E$ (for details see \citet{Maz11}. We then define the \emph{isoperimetric function} $I_\Omega\colon [0,1] \rightarrow [0,\infty]$ of $\Omega$ as
\begin{equation*}
    I_\Omega(t) = \left\{
        \begin{array}{ll}
            \inf \{ P(E,\Omega)\colon E \subset \Omega,~ t \leq |E| \leq \tfrac{1}{2} \}  & \mbox{if~} t \in [0, \tfrac{1}{2}], \\
            I_\Omega(1-t) & \mbox{if~} t \in (\tfrac{1}{2},1].
        \end{array}
    \right.
\end{equation*}

Note that $I_\Omega(t)$ is finite for $t \in [0,\tfrac{1}{2})$ (for the detailed proof, see \cite[Chapter~4]{CiPiSl15}). Also, by \cite[Proposition~4.1]{CiPiSl15}, there exists a constant $c>0$ such that
\begin{equation*}
    I_\Omega(t) \leq c \cdot t^\frac{1}{n'} \quad \mbox{near~zero}.
\end{equation*}
Thus, the best possible behavior of the isoperimetric function at 0 is $I_\Omega(t) \approx t^{\frac{1}{n'}}$. What this means is that, essentially, $I_\Omega(t)$ cannot decay more slowly than $t^\frac{1}{n'}$ as $t \rightarrow 0$.

We will say that $\Omega$ has a \emph{Lipschitz boundary}, or simply is a \emph{Lipschitz domain}, if at each point of the boundary of $\Omega$, the boundary is locally the graph of a Lipschitz function.

We shall call $\Omega$ a \emph{John domain}, if there exists $c \in (0,1)$ and $x_0 \in \Omega$ such that for every $x \in \Omega$ there exists a rectifiable curve parametrized by arclength $\xi \colon [0,r] \rightarrow \Omega$, such that $\xi(0) = x$, $\xi(r) = x_0$ and
\begin{equation*}
    \mathrm{dist}(\xi(s),\partial\Omega) \geq c \cdot s \quad \mbox{for}~s \in [0,r].
\end{equation*}
An important link between John domains and the theory of isoperimetric functions is that if $\Omega$ is a John domain, then $I_\Omega(t) \approx t^\frac{1}{n'}$.

Let $\alpha \in [\frac{1}{n'},1]$. We define the \emph{Maz'ya class} of Euclidean domains $G$ as
\begin{equation*}
    \mathcal{J}_\alpha = \big\{ G\colon I_G(t) \geq c \cdot t^\alpha \quad \mbox{~for~some~constant~}c>0 \mbox{~and~} t \in [0,\tfrac{1}{2}] \big\}.
\end{equation*}

We observe that all Lipchitz domains are John domains and the inclusion $\mathcal{J}_\alpha \subseteq \mathcal{J}_\beta$ holds for $\alpha \leq \beta$.

Let $m \in \mathbb{N}$, let $X(\Omega)$ be a rearrangement-invariant function space. The $m$\emph{-th order Sobolev space} $W^mX(\Omega)$ is defined as
\begin{equation*}
    W^mX(\Omega) = \{ u\colon u \mbox{~is~} m \mbox{-times~weakly~differentiable~in~} \Omega,
\end{equation*}
\begin{equation*}
    \quad |\nabla^ku| \in X(\Omega) \mbox{~for~} k = 0,\dots,m \}, 
\end{equation*}
where $\nabla^ku$ denotes the vector of partial derivatives of $u$ of order $k$, and the $m$\emph{-th order Sobolev space} $V^mX(\Omega)$ is defined as
\begin{equation*}
    V^mX(\Omega) = \{ u\colon u \mbox{~is~} m \mbox{-times~weakly~differentiable~in~} \Omega,
\end{equation*}
\begin{equation*}
    |\nabla^mu| \in X(\Omega) \}. \qquad \qquad \quad \quad ~ \,
\end{equation*}
Assume now that
\begin{equation} \label{E: Isoperimetric-Sobolev-equality}
    \int_0^1 \frac{1}{I_\Omega(s)} \, \mathrm{d}s < \infty .
\end{equation}
Then, by \cite[Proposition~4.5]{CiPiSl15}
\begin{equation*}
    W^mX(\Omega) = V^mX(\Omega)    
\end{equation*}
in set-theoretical sense with their norms equivalent. If we only consider a weaker form of \eqref{E: Isoperimetric-Sobolev-equality}, namely that there exists a positive constant $c$ such that
\begin{equation*}
    I_\Omega(t) \geq c \cdot t \quad \mbox{for~} t \in [0, \tfrac{1}{2}],
\end{equation*}
it then holds by \cite[Chapter~4, Corollary 4.3, Proposition 4.4]{CiPiSl15} that $V^mX(\Omega) \hookrightarrow V^mL^1(\Omega)$, $ V^mX(\Omega) \subset V^kL^1(\Omega) $ for every $k = 0,\dots,m-1,$ and furthermore for any $Y(\Omega)$ rearrangement-invariant space, $V^mX(\Omega) \hookrightarrow Y(\Omega)$ if and only if there exists a positive constant $c$ such that $ \|u\|_{Y(\Omega)} \leq c \cdot \| \nabla^mu \|_{X(\Omega)}$ for all $u \in V_\perp^mX(\Omega)$, where
\begin{equation*}
    V_\perp^mX(\Omega) = \big\{ u \in V^mX(\Omega)\colon \int_\Omega \nabla^ku(x) \, \mathrm{d}x = 0 \quad \mathrm{for~} k = 0,\dots,m-1 \big\}. 
\end{equation*}

In the case where $X(\Omega) = L^A(\Omega)$ is an Orlicz space, we define an \emph{Orlicz--Sobolev space} as $W^mL^A(\Omega) = W^mX(\Omega)$.

We say that $X(\Omega)$ is the optimal (largest) rearrangement-invariant domain space in the embedding
\begin{equation} \label{E:prelim-sobolev}
    W^m X(\Omega) \hookrightarrow Y(\Omega)
\end{equation}
if $X(\Omega)$ is a rearrangement-invariant space, embedding \eqref{E:prelim-sobolev} holds, and if \eqref{E:prelim-sobolev} holds with $X(\Omega)$ replaced by a rearrangement-invariant space $Z(\Omega)$, then the embedding $Z(\Omega) \hookrightarrow X(\Omega)$ holds. Similarly, we say that $Y(\Omega)$ is the optimal (smallest) r.i. target space in embedding \eqref{E:prelim-sobolev}, if $Y(\Omega)$ is a rearrangement-invariant space, embedding \eqref{E:prelim-sobolev} holds, and if embedding \eqref{E:prelim-sobolev} holds with $Y(\Omega)$ replaced by a rearrangement-invariant space $V(\Omega)$, then the embedding $Y(\Omega) \hookrightarrow V(\Omega)$ holds.

\chapter{Background results}

Let $n \in \mathbb{N}$, $n \geq 2$, let $\Omega \subset \mathbb{R}^n$ be an open set, and let $X(\Omega)$ be a rearrangement-invariant space. For the sake of brevity, we shall refer to rearrangement-invariant spaces as r.i.~spaces from this point onwards. In this work, we consider Sobolev spaces $W^mX(\Omega)$ together with their norm defined as
\begin{equation*}
    \|u\|_{W^mX(\Omega)} = \sum_{k = 0}^m \|\nabla^ku\|_{X(\Omega)}
\end{equation*}
for $m \in \mathbb{N}$. Furthermore, we consider Sobolev embeddings of the form
\begin{equation} \label{E:sobolev}
	W^m X(\Omega) \hookrightarrow Y(\Omega),
\end{equation}
where $Y(\Omega)$ is an r.i.~space. We restrict ourselves to such sets $\Omega$ which fulfil the property
\begin{equation*} 
    \inf_{t \in (0,1)} \frac{I_\Omega(t)}{t} > 0
\end{equation*}
and classes of such sets. Furthermore, it is known that given such an r.i.~space $Y(\Omega)$, the optimal r.i.~domain space always exists and can be explicitly described. Namely, the combination of a reduction principle from \citet{CiPiSl15} with the characterization of the optimal domain for a Copson integral operator from \citet[Proposition~3.3]{Mi23}, such optimal r.i.~space $X(\Omega)$ obeys
\begin{equation} \label{E:optimal-ri-domain}
	\|u\|_{X( \Omega )} = \sup_{h} \left\Vert \int_{t}^{1} \frac{h(s)}{I_{\Omega}(s)} \, \mathrm{d}s \right \Vert _{Y(0,1)}~~~~ \mathrm{for}~u\in \mathcal{M}(\Omega),
\end{equation}
where the supremum is taken over all $h \in \mathcal{M}_+(0,1)$ such that $h^* = u^*$.

We first examine possible approaches to the reduction of Sobolev embeddings to significantly simpler one-dimensional inequalities for Maz'ya domains. Such problems have already been examined and solved, and as such, for our purposes, it suffices to use the reduction principle stated and proven in \cite[Theorem 6.4]{CiPiSl15}, which follows. For the proof, see the original paper.

\begin{theorem}[reduction principle for Maz'ya domains] \label{T:reduction-principle-mazya}
    Let $n \in \mathbb{N}$, $n \geq 2$, $m \in \mathbb{N}$ and $\alpha \in [\tfrac{1}{n'},1).$ Let $\| \cdot \|_{X(0,1)}$ and $\| \cdot \|_{Y(0,1)}$ be r.i.~function norms. Let $c > 0$ be such that
    \begin{equation} \label{E: reduction-principle-mazya-inequality}
        \left\Vert \int_t^1 f(s)s^{-1+m(1-\alpha)} \, \mathrm{d}s \right\Vert_{Y(0,1)} \leq c \cdot \|f\|_{X(0,1)}
    \end{equation}
    for any $f \in X(0,1)$ nonnegative. Then, the Sobolev embedding \eqref{E:sobolev}
    holds for every $\Omega \in \mathcal{J}_\alpha$. Conversely, if the Sobolev embedding \eqref{E:sobolev} holds for every $\Omega \in \mathcal{J}_\alpha$, then the inequality \eqref{E: reduction-principle-mazya-inequality} holds.
\end{theorem}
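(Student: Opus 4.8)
\medskip\noindent
\textit{Proof sketch.}
The plan is to prove the two implications separately; in both directions the mechanism is to reduce the $m$-th order Sobolev inequality on a general Euclidean domain to a one-dimensional weighted Hardy--Copson inequality by iterating the classical first-order rearrangement estimate of Maz'ya, and then to specialise the isoperimetric weight to the power profile $t^{\alpha}$.

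\medskip\noindent
\textbf{Sufficiency.} Fix $\Omega \in \mathcal{J}_\alpha$, so that $I_\Omega(t) \gtrsim t^{\alpha}$ on $[0,\tfrac12]$. Since $\alpha < 1$ and $I_\Omega$ is symmetric about $\tfrac12$, one gets $I_\Omega(t) \gtrsim t$ on $[0,\tfrac12]$ and $\int_0^1 I_\Omega(s)^{-1}\,\mathrm{d}s \lesssim \int_0^{1/2} s^{-\alpha}\,\mathrm{d}s < \infty$, i.e.\ condition \eqref{E: Isoperimetric-Sobolev-equality} holds; hence $W^mX(\Omega) = V^mX(\Omega)$ with equivalent norms, and by the characterisation of such embeddings via the class $V^m_\perp$ recalled in Chapter~1 it suffices to prove $\|u\|_{Y(\Omega)} \lesssim \|\nabla^m u\|_{X(\Omega)}$ for every $u \in V^m_\perp X(\Omega)$. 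I would then iterate $m$ times the first-order estimate --- which bounds $-\tfrac{\mathrm{d}}{\mathrm{d}t}u^{*}(t)$ by $I_\Omega(t)^{-1}$ times the rearrangement of $|\nabla u|$ over the superlevel set $\{\,|u|>u^{*}(t)\,\}$ --- and absorb the finitely many lower-order contributions (which for $u \in V^m_\perp$ are controlled by $\|\nabla^m u\|_{X(\Omega)}$, along with the part of the estimate coming from the region near $t=1$) to reach a pointwise bound of the form
\[
    u^{*}(t) \;\lesssim\; \int_t^{1} |\nabla^m u|^{*}(s)\left(\int_t^{s}\frac{\mathrm{d}r}{I_\Omega(r)}\right)^{m-1}\frac{\mathrm{d}s}{I_\Omega(s)} \;+\; \|\nabla^m u\|_{X(\Omega)} \qquad \text{for } t \in (0,1).
\]
The decisive elementary step is that $I_\Omega(s) \gtrsim s^{\alpha}$ together with $\alpha < 1$ forces, for $s \in (t,\tfrac12]$,
\[
    \left(\int_t^{s}\frac{\mathrm{d}r}{I_\Omega(r)}\right)^{m-1}\frac{1}{I_\Omega(s)} \;\lesssim\; \left(\int_0^{s} r^{-\alpha}\,\mathrm{d}r\right)^{m-1}\! s^{-\alpha} \;\approx\; s^{(1-\alpha)(m-1)}\,s^{-\alpha} \;=\; s^{-1+m(1-\alpha)},
\]
the remaining tail over $(\tfrac12,1)$ being again absorbed into $\|\nabla^m u\|_{X(\Omega)}$; consequently $u^{*}(t) \lesssim \int_t^{1}|\nabla^m u|^{*}(s)\,s^{-1+m(1-\alpha)}\,\mathrm{d}s + \|\nabla^m u\|_{X(\Omega)}$. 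Taking $\|\cdot\|_{Y(0,1)}$ of both sides (legitimate since $\|\cdot\|_{Y(0,1)}$ is monotone and $\|1\|_{Y(0,1)}<\infty$), invoking the hypothesis \eqref{E: reduction-principle-mazya-inequality} with $f=|\nabla^m u|^{*}$, and recalling that $\|\cdot\|_{X(\Omega)}$ and $\|\cdot\|_{Y(\Omega)}$ are evaluated through non-increasing rearrangements, yields \eqref{E:sobolev} for this $\Omega$.

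\medskip\noindent
\textbf{Necessity.} For the converse, I would exhibit a single extremal domain $\Omega_\alpha \in \mathcal{J}_\alpha$ on which the preceding inequalities reverse, namely a Maz'ya-type domain of revolution built from a power profile chosen so that $I_{\Omega_\alpha}(t) \approx t^{\alpha}$ near zero. For such a domain the iterated kernel is, for $s$ bounded away from $1$, comparable \emph{from both sides} to $s^{-1+m(1-\alpha)}$, and by testing the first-order rearrangement inequality against explicit radial ``spike'' functions associated with a prescribed $f=f^{*}\in X(0,1)$ one realises, up to equivalence, every nonnegative $f\in X(0,1)$ as $|\nabla^m u|^{*}$ for some $u\in V^m_\perp X(\Omega_\alpha)$ whose rearrangement satisfies $u^{*}(t) \gtrsim \int_t^{1/2} f(s)\,s^{-1+m(1-\alpha)}\,\mathrm{d}s$. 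Thus $W^mX(\Omega_\alpha)\hookrightarrow Y(\Omega_\alpha)$ is equivalent to \eqref{E: reduction-principle-mazya-inequality}; since \eqref{E:sobolev} is assumed to hold for \emph{every} member of $\mathcal{J}_\alpha$, in particular for $\Omega_\alpha$, the one-dimensional inequality follows.

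\medskip\noindent
\textbf{Main obstacle.} The power-function bookkeeping above is routine; the genuine work sits in the two general-domain ingredients used here as black boxes: (a) the higher-order pointwise symmetrization estimate, obtained by iterating the first-order isoperimetric inequality and the coarea formula while carefully tracking the lower-order Poincar\'e corrections and the behaviour near $t=\tfrac12$; and (b) the construction of the model domain $\Omega_\alpha$ together with the verification that its isoperimetric profile is \emph{exactly} of order $t^{\alpha}$ and that every gradient rearrangement is attained on it. Both of these are carried out in \cite{CiPiSl15} (relying on \cite{Maz11}), which is why the statement is quoted here, with the reader referred to those sources for the proof.
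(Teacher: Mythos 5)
The paper does not prove this theorem: it is quoted verbatim from \cite[Theorem~6.4]{CiPiSl15} with the reader referred to that paper, and your sketch correctly reproduces the strategy of the cited proof --- iterated first-order symmetrization plus the power-profile computation for sufficiency, and testing on the extremal domain $\Omega_\alpha$ (which this paper later recalls as Proposition~\ref{T: worst-domain}) for necessity. Your honest identification of the two black boxes as the content of \cite{CiPiSl15} is exactly where the paper also places them, so the approaches coincide.
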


Note that we have omitted the case $\alpha = 1$. While our results may be, after some modification, applied to such a case, it remains rather technical and is beyond the scope of this work.

As a consequence of Theorem \ref{T:reduction-principle-mazya}, we can identify the optimal r.i.~target space $Y(\Omega)$ associated with a given domain space $X(\Omega)$ in the Sobolev embedding \eqref{E:sobolev} for any $\Omega \in \mathcal{J}_\alpha$. This is also a known result, for the proof see \cite[Theorem 6.5]{CiPiSl15}.

\begin{theorem}[optimal r.i.~target for Maz'ya domains] \label{T:optimal-ri-target-mazya}
    Let $n,~m,~\alpha$ and $\| \cdot \|_{X(0,1)}$ be as in Theorem \ref{T:reduction-principle-mazya}. Define the functional $\| \cdot \|_{X'_{m,\alpha}(0,1)}$ as
    \begin{equation*}
        \| \cdot \|_{X'_{m,\alpha}(0,1)} = \left\Vert t^{-1+m(1-\alpha)} \int_0^t f^*(s) \, \mathrm{d}s \, \right\Vert_{X'(0,1)}
    \end{equation*}
    where $\| \cdot \|_{X'(0,1)}$ denotes the associate function norm to $\| \cdot \|_{X(0,1)}$. Then, the functional $\| \cdot \|_{X'_{m,\alpha}(0,1)}$ is an r.i.~function norm, whose associate norm $\| \cdot \|_{X_{m,\alpha}(0,1)}$ satisfies
    \begin{equation} \label{E: optimal-ri-target-mazya-embedding}
        W^mX(\Omega) \hookrightarrow X_{m, \alpha}(\Omega)
    \end{equation}
    for every $\Omega \in \mathcal{J}_\alpha$, and for some constant $c$ depending on $\Omega$, $m$, $X(\Omega)$ and $Y(\Omega)$
    \begin{equation} \label{E: optimal-ri-target-mazya-inequality}
        \|u\|_{X_{m,\alpha}(\Omega)} \leq c \cdot \|\nabla^mu\|_{X(\Omega)}
    \end{equation}
    for every $\Omega \in \mathcal{J}_\alpha$ and every $u \in V_\perp^mX(\Omega)$. Furthermore, the function norm $\| \cdot \|_{X_{m,\alpha}(0,1)}$ is optimal in \eqref{E: optimal-ri-target-mazya-embedding} and \eqref{E: optimal-ri-target-mazya-inequality} among all r.i.~norms, as $\Omega$ ranges in $\mathcal{J}_\alpha$.
\end{theorem}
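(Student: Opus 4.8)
The statement is Theorem \ref{T:optimal-ri-target-mazya}, asserting that $\|\cdot\|_{X'_{m,\alpha}(0,1)}$ is an r.i. function norm, that its associate norm furnishes the Sobolev embedding \eqref{E: optimal-ri-target-mazya-embedding} (and the reduced inequality \eqref{E: optimal-ri-target-mazya-inequality}) for every $\Omega\in\mathcal J_\alpha$, and that this target is optimal among all r.i. norms uniformly over $\mathcal J_\alpha$. Since the paper explicitly cites \cite[Theorem 6.5]{CiPiSl15} for this, the task is to explain how one assembles it from the reduction principle (Theorem \ref{T:reduction-principle-mazya}) plus the general duality/associate-space machinery recalled in Section \ref{S: Function spaces}.

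Let me sketch the proof I would give. Write $Rf(t)=t^{-1+m(1-\alpha)}\int_0^t f^*(s)\,\mathrm ds$, so that $\|f\|_{X'_{m,\alpha}(0,1)}=\|Rf\|_{X'(0,1)}$. \emph{Step 1 (it is an r.i. function norm).} Properties \ref{(P2)}, \ref{(P3)}, \ref{(P6)} and the scaling part of \ref{(P1)} follow mechanically: $R$ is monotone in $f^*$, commutes with monotone limits by monotone convergence, depends only on $f^*$, and is positively homogeneous; then $\|\cdot\|_{X'(0,1)}$ inherits these. The triangle inequality needs the subadditivity $(f+g)^{**}\le f^{**}+g^{**}$ (quoted in the excerpt), which gives $R(f+g)\le Rf+Rg$ pointwise, whence \ref{(P1)}. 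Nontriviality — that $\|\chi_{(0,1)}\|_{X'_{m,\alpha}}<\infty$ (this is \ref{(P4)}) and $\|f\|_{X'_{m,\alpha}}\gtrsim\int_0^1 f$ (this is \ref{(P5)}) — uses $-1+m(1-\alpha)>-1$, i.e. $m(1-\alpha)>0$, which holds since $\alpha<1$: then $R\chi_{(0,1)}(t)\approx t^{m(1-\alpha)}$ is bounded, and $Rf(t)\ge t^{-1+m(1-\alpha)}\int_0^t f^*$, which dominates a multiple of $f^*$ near a fixed point, so $\|Rf\|_{X'(0,1)}\gtrsim\|f^*\|_{L^1}$ by \ref{(P5)} for $X'$. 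Hence $\|\cdot\|_{X'_{m,\alpha}(0,1)}$ is an r.i. function norm, and by \cite[Chapter~1,~Theorem~2.2]{BS} so is its associate $\|\cdot\|_{X_{m,\alpha}(0,1)}$.

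\emph{Step 2 (the embedding holds).} By Theorem \ref{T:reduction-principle-mazya} it suffices to verify the one-dimensional inequality \eqref{E: reduction-principle-mazya-inequality} with $Y=X_{m,\alpha}$, i.e.
\begin{equation*}
    \left\Vert\int_t^1 f(s)s^{-1+m(1-\alpha)}\,\mathrm ds\right\Vert_{X_{m,\alpha}(0,1)}\lesssim\|f\|_{X(0,1)}
\end{equation*}
for $f\ge0$. Unravel the left side by the definition of the associate norm: it equals $\sup_g\int_0^1\big(\int_t^1 f(s)s^{-1+m(1-\alpha)}\,\mathrm ds\big)g(t)\,\mathrm dt$ over $g\ge0$ with $\|g\|_{X'_{m,\alpha}(0,1)}=\|Rg\|_{X'(0,1)}\le1$. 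By Fubini this double integral is $\int_0^1 f(s)s^{-1+m(1-\alpha)}\big(\int_0^s g(t)\,\mathrm dt\big)\,\mathrm ds=\int_0^1 f(s)\,(Rg)(s)\,\mathrm ds$ when $g$ is already nonincreasing; in general one passes to $g^*$ using Hardy's lemma / the Hardy–Littlewood inequality \eqref{H-L} to replace $g$ by $g^*$ at the cost of a harmless constant, so that $\int_0^s g\le\int_0^s g^*$ and the inner factor is exactly $Rg(s)$. Then Hölder's inequality in $X$–$X'$ bounds $\int_0^1 f\,(Rg)\le\|f\|_{X(0,1)}\|Rg\|_{X'(0,1)}\le\|f\|_{X(0,1)}$. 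Taking the supremum over $g$ gives the claim, hence \eqref{E: optimal-ri-target-mazya-embedding}; the reduced inequality \eqref{E: optimal-ri-target-mazya-inequality} on $V_\perp^mX(\Omega)$ is the companion statement from the reduction principle machinery recalled before Theorem \ref{T:reduction-principle-mazya}.

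\emph{Step 3 (optimality).} Suppose $Y(\Omega)$ is an r.i. space with $W^mX(\Omega)\hookrightarrow Y(\Omega)$ for every $\Omega\in\mathcal J_\alpha$. By the converse direction of Theorem \ref{T:reduction-principle-mazya}, $\|\int_t^1 f(s)s^{-1+m(1-\alpha)}\,\mathrm ds\|_{Y(0,1)}\lesssim\|f\|_{X(0,1)}$ for all $f\ge0$. I want $Y(\Omega)\hookrightarrow X_{m,\alpha}(\Omega)$, equivalently by \cite[Chapter~1,~Proposition~2.10]{BS} that $X'_{m,\alpha}(0,1)\hookrightarrow Y'(0,1)$, i.e. $\|Rg\|_{X'(0,1)}\lesssim\|g\|_{Y'(0,1)}$ — no, more precisely $\|g\|_{Y'(0,1)}\gtrsim\|g\|_{X'_{m,\alpha}(0,1)}$. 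Fix $g\ge0$ nonincreasing; choose the test function $f$ in the reduced inequality to "almost attain" the associate-norm supremum defining $\|f\|_{X(0,1)}=\|f\|_{X''(0,1)}=\sup\{\int fh:\|h\|_{X'}\le1\}$, taking $h$ close to the extremal for $g$, i.e. arranging $\int_0^1 f(s)\,Rg(s)\,\mathrm ds$ to be comparable to $\|f\|_{X(0,1)}\|Rg\|_{X'(0,1)}$. Then the left-hand side $\|\int_t^1 f s^{-1+m(1-\alpha)}\|_{Y(0,1)}\ge\int_0^1(\int_t^1 f s^{-1+m(1-\alpha)})g(t)\,\mathrm dt/\|g\|_{Y'(0,1)}=\int_0^1 f\,Rg/\|g\|_{Y'(0,1)}\gtrsim\|f\|_{X(0,1)}\|Rg\|_{X'(0,1)}/\|g\|_{Y'(0,1)}$, and comparing with the right-hand side $\lesssim\|f\|_{X(0,1)}$ yields $\|Rg\|_{X'(0,1)}=\|g\|_{X'_{m,\alpha}(0,1)}\lesssim\|g\|_{Y'(0,1)}$, i.e. $Y'(0,1)\hookrightarrow X'_{m,\alpha}(0,1)$, hence $X_{m,\alpha}(\Omega)\hookrightarrow Y(\Omega)$ as required.

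\textbf{Main obstacle.} The delicate point is Step 3: the near-optimal choice of the test function $f$. One cannot simply pick the extremizer of a supremum (it may not exist in $X$), so one works with $\varepsilon$-near-extremizers and must check that the Copson operator $f\mapsto\int_t^1 fs^{-1+m(1-\alpha)}$ and the adjoint Hardy-type operator $g\mapsto Rg$ are genuine associates of each other in the right sense — this is exactly where the characterization of the optimal domain for a Copson operator from \citet[Proposition~3.3]{Mi23} (alluded to around \eqref{E:optimal-ri-domain}) is used, and where one must be careful that the duality pairing commutes with the rearrangement operations (Hardy's lemma, monotonicity of $f\mapsto f^*$). Everything else — Step 1 and Step 2 — is a routine combination of Fubini, Hölder, and the boundedness of the dilation operator on r.i. spaces (used to absorb the constants from replacing $g$ by $g^*$).
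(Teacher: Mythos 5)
The paper does not actually prove this theorem; it is quoted verbatim from \cite[Theorem~6.5]{CiPiSl15}, so I am comparing your reconstruction against the standard argument used there. Your reconstruction is essentially that argument and is mathematically sound: writing $Rf(t)=t^{m(1-\alpha)}f^{**}(t)$, the norm axioms follow from subadditivity of $f\mapsto f^{**}$ and $m(1-\alpha)>0$; the embedding follows from the reduction principle plus Fubini, the Hardy--Littlewood inequality $\int_0^s g\le\int_0^s g^*$, and H\"older in the $X$--$X'$ pairing; optimality is the converse of the reduction principle plus Fubini and H\"older in the $Y$--$Y'$ pairing.

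Two points deserve comment. First, at the start of Step 3 you state the wrong goal: optimality of the \emph{smallest} target requires $X_{m,\alpha}(\Omega)\hookrightarrow Y(\Omega)$ for every competitor $Y$, not $Y(\Omega)\hookrightarrow X_{m,\alpha}(\Omega)$. By duality the correct goal is $Y'(0,1)\hookrightarrow X'_{m,\alpha}(0,1)$, i.e.\ $\|Rg\|_{X'(0,1)}\lesssim\|g\|_{Y'(0,1)}$ --- which is exactly the inequality you then derive, and your final sentence draws the correct conclusion, so this is a slip in the signposting rather than a gap; but as written the "equivalently" chain is internally inconsistent and should be fixed. Second, the "main obstacle" you identify in Step 3 is illusory: no delicate extremizer selection or appeal to \cite[Proposition~3.3]{Mi23} is needed. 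For $g=g^*\ge0$ one has, for \emph{every} $f\ge0$ with $\|f\|_{X(0,1)}\le1$,
\begin{equation*}
  \int_0^1 f\,Rg
  =\int_0^1 g(t)\int_t^1 f(s)s^{-1+m(1-\alpha)}\,\mathrm ds\,\mathrm dt
  \le\Bigl\Vert\int_t^1 f(s)s^{-1+m(1-\alpha)}\,\mathrm ds\Bigr\Vert_{Y(0,1)}\|g\|_{Y'(0,1)}
  \lesssim\|g\|_{Y'(0,1)},
\end{equation*}
and taking the supremum over such $f$ at the end gives $\|Rg\|_{X'(0,1)}\lesssim\|g\|_{Y'(0,1)}$ directly, since $\|h\|_{X'}=\sup\{\int_0^1 hf:\ f\ge0,\ \|f\|_{X}\le1\}$ and $X''=X$. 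Step 3 is thus the exact mirror of your Step 2, and the $\varepsilon$-near-extremizer phrasing, while not wrong, obscures this.
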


We shall now use Theorem \ref{T:optimal-ri-target-mazya} to show the optimal target r.i.~space for certain critical r.i.~spaces.
\begin{example}
Let $ \alpha \in [\tfrac{1}{n'},1)$. Then, for any $\Omega \in \mathcal{J}_\alpha$ and for any $q \in (1,\infty]$, given the critical spaces $W^mL^{\frac{1}{m(1-\alpha)},q}(\Omega)$, we obtain the embedding
\begin{equation*}
    W^mL^{\frac{1}{m(1-\alpha)},q}(\Omega) \hookrightarrow L^{\infty, q; -1+(1-\alpha)m - \frac{1}{q}}(\Omega),
\end{equation*}
where $L^{\infty, q; -1+(1-\alpha)m - \frac{1}{q}}(\Omega)$ is the smallest (= optimal) space with this property for any $\Omega \in \mathcal{J}_\alpha$. In the case $q=1$, we obtain the embedding
\begin{equation*}
    W^mL^{\frac{1}{m(1-\alpha)},1}(\Omega) \hookrightarrow L^{\infty}(\Omega),
\end{equation*}
where $L^{\infty}(\Omega)$ is the optimal target space. In the case $q = \infty$, we obtain the embedding
\begin{equation*}
    W^mL^{\frac{1}{m(1-\alpha)},\infty}(\Omega) \hookrightarrow \mathrm{exp} \, L^{\frac{1}{1-m(1-\alpha)}}(\Omega),
\end{equation*}
where $\exp L^{\frac{1}{1-m(1-\alpha)}}(\Omega)$ is the optimal target space. In the case $q=\frac{1}{m(1-\alpha)}$, we obtain the embedding
\begin{equation}\label{E:general-bw}
    W^mL^{\frac{1}{m(1-\alpha)}}(\Omega) \hookrightarrow L^{\infty,\frac{1}{m(1-\alpha)};-1}(\Omega),
\end{equation}
where $L^{\infty,\frac{1}{m(1-\alpha)};-1}(\Omega)$ is the optimal target space, and which recovers, as its special case for $\alpha=\frac{1}{n'}$, the result of~\cite{BrWa80}.
\end{example}

The question remains whether we can say anything about the optimality of r.i.~domains, given a target r.i.~space. This problem has also been extensively studied, and as such, for our purposes, it suffices to modify a known result by \cite[Theorem 3.3]{KerPi06}.
\begin{theorem}[optimal r.i.~domain space] \label{T:optimal-ri-domain-mazya}
    Let $m$, $n \in \mathbb{N}$, $\alpha \in [\tfrac{1}{n'},1)$. Let $Y(0,1)$ be an r.i.~space such that
    \begin{equation}\label{E:KPembedding}
        Y(0,1) \hookrightarrow L^{\frac{1}{1-m(1-\alpha)},1}(0,1).
    \end{equation}
    Define the functional $\| \cdot \|_{X(0,1)}$ as
    \begin{equation*}
        \| f \|_{X(0,1)}  =  \sup_{h \sim f} \left\Vert \int_t^1 s^{-1+m(1-\alpha)}h(s) \, \mathrm{d}s \right\Vert_{Y(0,1)} \quad \mbox{for~} f \in \mathcal{M}(0,1),
    \end{equation*}
    where the supremum is extended over all $h \in \mathcal{M}_+(0,1)$ such that $h^* = f^*$, and the set
    \begin{equation*}
        X(0,1) = \big\{ f \in \mathcal{M}(0,1)\colon \| f \|_{X(0,1)} <\infty \big\}.
    \end{equation*}    
    Then, the embedding
    \begin{equation*}
        W^mX(\Omega) \hookrightarrow Y(\Omega)
    \end{equation*}
    holds for every $\Omega \in \mathcal{J}_\alpha$. Furthermore, $X(0,1)$ is the largest r.i.~space with this property.
 \end{theorem}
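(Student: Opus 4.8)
The plan is to reduce the Sobolev embedding to a one-dimensional inequality for the Copson-type operator $R_\alpha f(t)=\int_t^1 s^{-1+m(1-\alpha)}f(s)\,\mathrm{d}s$ by means of Theorem~\ref{T:reduction-principle-mazya}, and to handle the supremum functional defining $\|\cdot\|_{X(0,1)}$ along the lines of \cite{KerPi06} and \cite[Proposition~3.3]{Mi23}. Observe first that \eqref{E:KPembedding} is meaningful only when $m(1-\alpha)\in(0,1)$, which we henceforth assume; then $p:=\tfrac1{1-m(1-\alpha)}\in(1,\infty)$, the weight $w(s):=s^{-1+m(1-\alpha)}$ is integrable over $(0,1)$, and, writing $h\sim f$ whenever $h^*=f^*$, we have $\|f\|_{X(0,1)}=\sup_{h\sim f,\ h\ge0}\|R_\alpha h\|_{Y(0,1)}$.

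\smallskip

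\noindent\emph{$\|\cdot\|_{X(0,1)}$ is an r.i.\ function norm.} The crux is the dual representation $\|f\|_{X(0,1)}=\sup\{\int_0^1 f^*(s)(wG)^*(s)\,\mathrm{d}s:\ g\in\mathcal M_+(0,1),\ \|g\|_{Y'(0,1)}\le1\}$, where $G(s)=\int_0^s g(\sigma)\,\mathrm{d}\sigma$; it is obtained by writing $\|R_\alpha h\|_{Y(0,1)}$ through the associate norm, using Fubini to rewrite $\int_0^1(R_\alpha h)g=\int_0^1 h\,wG$, interchanging the two suprema, and invoking the sharp form of \eqref{H-L}, i.e.\ $\sup_{h\sim f}\int_0^1 h\phi=\int_0^1 f^*\phi^*$ for $\phi\ge0$. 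Granted this identity, \ref{(P6)} and positive homogeneity are immediate, \ref{(P2)} follows from the monotonicity of the rearrangement, \ref{(P3)} from monotone convergence together with the interchange of suprema, and the triangle inequality in \ref{(P1)} from the subadditivity of each functional $f\mapsto\int_0^1 f^*(wG)^*$, which holds because $t\mapsto\int_0^t(f_1+f_2)^*$ is dominated by $\int_0^t f_1^*+\int_0^t f_2^*$ (subadditivity of $f^{**}$) and $(wG)^*$ is non-increasing, so that Hardy's lemma applies. Property \ref{(P4)} is automatic, as $h\sim1$ forces $h\equiv1$ and hence $\|1\|_{X(0,1)}=\|R_\alpha1\|_{Y(0,1)}\le\frac1{m(1-\alpha)}\|1\|_{Y(0,1)}<\infty$. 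Finally, \ref{(P5)} --- which also gives the nontrivial implication of the first line of \ref{(P1)} --- is exactly where \eqref{E:KPembedding} is indispensable: taking $h=f^*$ in the supremum and using \eqref{E:KPembedding} together with a Fubini computation,
\[
\|f\|_{X(0,1)}\ \ge\ \|R_\alpha f^*\|_{Y(0,1)}\ \gtrsim\ \|R_\alpha f^*\|_{L^{p,1}(0,1)}\ =\ \tfrac1{1-m(1-\alpha)}\,\|f\|_{L^1(0,1)}.
\]

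\smallskip

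\noindent\emph{The embedding and its optimality.} For nonnegative $f\in X(0,1)$, choosing $h=f$ in the defining supremum gives $\|R_\alpha f\|_{Y(0,1)}\le\|f\|_{X(0,1)}$, which is \eqref{E: reduction-principle-mazya-inequality} with $c=1$; by Theorem~\ref{T:reduction-principle-mazya} the embedding $W^mX(\Omega)\hookrightarrow Y(\Omega)$ then holds for every $\Omega\in\mathcal J_\alpha$. For optimality, let $Z(0,1)$ be any r.i.\ space for which $W^mZ(\Omega)\hookrightarrow Y(\Omega)$ holds for every $\Omega\in\mathcal J_\alpha$; the converse half of Theorem~\ref{T:reduction-principle-mazya} supplies $c>0$ with $\|R_\alpha f\|_{Y(0,1)}\le c\|f\|_{Z(0,1)}$ for all nonnegative $f\in Z(0,1)$. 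For arbitrary $f\in Z(0,1)$ and any $h\in\mathcal M_+(0,1)$ with $h^*=f^*$, rearrangement invariance gives $\|h\|_{Z(0,1)}=\|f\|_{Z(0,1)}$, hence $\|R_\alpha h\|_{Y(0,1)}\le c\|f\|_{Z(0,1)}$; taking the supremum over all such $h$ yields $\|f\|_{X(0,1)}\le c\|f\|_{Z(0,1)}$, that is $Z(0,1)\hookrightarrow X(0,1)$, and therefore $Z(\Omega)\hookrightarrow X(\Omega)$.

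\smallskip

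\noindent The main obstacle is the first part: verifying the triangle inequality (via the dual-representation-and-Hardy's-lemma argument above, essentially that of \cite{KerPi06,Mi23}) and, above all, property \ref{(P5)}, which genuinely fails when the target $Y(0,1)$ is too large --- this is precisely the purpose of hypothesis \eqref{E:KPembedding}, ensuring that the supremum functional is a bona fide r.i.\ function norm with $X(0,1)\hookrightarrow L^1(0,1)$, not merely a quasinorm. The remaining steps are routine consequences of the reduction principle.
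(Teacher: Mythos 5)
Your proposal is correct and is essentially the proof the paper defers to the literature: the paper omits the argument for Theorem~\ref{T:optimal-ri-domain-mazya}, describing it as a simple modification of \cite[Theorem~3.3]{KerPi06} combined with the reduction principle, and your write-up (dual representation of the supremum functional via Fubini and the resonance identity $\sup_{h\sim f}\int_0^1 h\phi=\int_0^1 f^*\phi^*$, Hardy's lemma for the triangle inequality, the Fubini computation showing that \eqref{E:KPembedding} yields \ref{(P5)}, and then the two directions of Theorem~\ref{T:reduction-principle-mazya} for the embedding and its optimality) is exactly that modification. All steps check out, including your observation that the statement implicitly requires $m(1-\alpha)<1$.
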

The proof of Theorem \ref{T:optimal-ri-domain-mazya} is a~simple modification of the proof in the paper and therefore is omitted. Note that, by \cite[Proposition~3.3]{Mi23} the theorem may be strengthened by omitting condition \eqref{E:KPembedding}. 

We shall now discuss the fundamental Orlicz spaces of r.i.~spaces. The following theorem was established and proved by \cite{MuPiTa22}.

\begin{theorem}[the principal alternative]\label{T:principal-alternative-for-spaces}
\begin{enumerate}
\item\label{en:PA-spaces-target} Let $Y$ be an r.i.~space and $L(Y)$ its fundamental Orlicz space.
Then either $Y\subset L(Y)$ and $L(Y)$ is the smallest Orlicz space containing $Y$,
or $Y\not\subset L(Y)$ and no smallest Orlicz space containing $Y$ exists.
\item\label{en:PA-spaces-domain} Let $X$ be an r.i.~space and $L(X)$ its fundamental Orlicz space.
Then either $L(X)\subset X$ and $L(X)$ is the largest Orlicz space contained in space $X$,
or $L(X)\not\subset X$ and no largest Orlicz space contained in $X$ exists.
\end{enumerate}
\end{theorem}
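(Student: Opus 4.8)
The plan is to prove part (i) directly and to derive part (ii) from it by associate-space duality, so that the real work sits in part (i). Throughout write $L(Y)=L^{A_Y}$ and recall from \eqref{Orlicz-fundamental_formula} that $\varphi_{L^A}(t)=1/A^{-1}(1/t)$; combined with the criterion ``$A$ dominates $B$ near infinity $\iff L^A\hookrightarrow L^B$'', this shows that for Young functions the embedding $L^A\hookrightarrow L^B$ is equivalent to the fundamental-function inequality $\varphi_{L^B}\lesssim\varphi_{L^A}$. For the first branch of (i), assume $Y\subseteq L(Y)$ and let $L^A$ be any Orlicz space with $Y\subseteq L^A$. Since set inclusion between r.i.\ spaces is automatically a continuous embedding, $Y\hookrightarrow L^A$, and testing on characteristic functions gives $\varphi_{L^A}\lesssim\varphi_Y\approx\varphi_{L(Y)}$; by the equivalence just recorded this forces $L(Y)\hookrightarrow L^A$, hence $L(Y)\subseteq L^A$. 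Together with $Y\subseteq L(Y)$ this says $L(Y)$ is the smallest Orlicz space containing $Y$.

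For the second branch, assume $Y\not\subseteq L(Y)$ and suppose, for contradiction, that a smallest Orlicz space $Z_0$ containing $Y$ exists. From $Y\hookrightarrow Z_0$ we get $\varphi_{Z_0}\lesssim\varphi_Y$. The decisive claim is that in fact $\varphi_{Z_0}\approx\varphi_Y$: granting it, $Z_0$ is an Orlicz space with fundamental function equivalent to $\varphi_Y=\varphi_{L(Y)}$, so by uniqueness of the Orlicz space on a fundamental level $Z_0=L(Y)$ (with equivalent norms); then $Y\subseteq Z_0=L(Y)$, contradicting the branch hypothesis, and hence no smallest Orlicz space containing $Y$ exists. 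To prove the claim one argues by contradiction again: if $\varphi_{Z_0}\not\approx\varphi_Y$, fix a sequence $t_k\downarrow 0$ with $\varphi_{Z_0}(t_k)/\varphi_Y(t_k)\to 0$ and construct an Orlicz space $L^B$ with $Y\subseteq L^B\subsetneq Z_0$, contradicting the minimality of $Z_0$.

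I expect this construction to be the main obstacle. Writing $Z_0=L^{A_0}$, the idea is to modify $A_0$ only over the range of function values associated with the scales $t_k$, steepening it there, so as to obtain a Young function $B$ with: (a) $B$ dominates $A_0$ near infinity but is not equivalent to it, which forces $L^B\subsetneq L^{A_0}$; and (b) the modular inequality $\int_0^1 B(g(t)/C)\,\mathrm{d}t\le 1$ still holds, with one fixed constant $C$, for every non-increasing $g$ with $\|g\|_{Y(0,1)}\le 1$, which is exactly the embedding $Y\hookrightarrow L^B$. The delicate point is (b): it cannot be decided from $\varphi_Y$ alone, since the envelope bound $g(t)\lesssim 1/\varphi_Y(t)$ valid on the unit ball of $Y$ is too crude, so one must use finer distributional information about the non-increasing members of that unit ball, together with the slack in $Y\hookrightarrow Z_0$ coming from $\varphi_{Z_0}\lesssim\varphi_Y$ being a strict inequality, to absorb the extra mass produced by the steepening. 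The concavity constraint on $B^{-1}$, which cannot be pulled down near each $t_k$ and then be allowed to recover before the next one, is what makes the bookkeeping genuinely technical.

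Part (ii) follows formally from part (i). Recall that the associate of an Orlicz space is again an Orlicz space (with the conjugate Young function), that $\varphi_{X'}(t)=t/\varphi_X(t)$, and that $Z\hookrightarrow W\iff W'\hookrightarrow Z'$ with $X''=X$. Hence $(L(X))'$ is the Orlicz space with fundamental function $t/\varphi_X(t)=\varphi_{X'}$, so $(L(X))'=L(X')$, and $Z\mapsto Z'$ is an order-reversing bijection from the Orlicz spaces contained in $X$ onto the Orlicz spaces containing $X'$. Consequently a largest Orlicz space contained in $X$ exists exactly when a smallest Orlicz space containing $X'$ does, and then the former is the associate of the latter. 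Applying part (i) with $Y=X'$: such a smallest space exists iff $X'\subseteq L(X')$, in which case it equals $L(X')$, whose associate is $(L(X'))'=((L(X))')'=L(X)$; and $X'\subseteq L(X')=(L(X))'$ is, on passing to associates, equivalent to $L(X)\subseteq X$. This is precisely the dichotomy asserted in part (ii).
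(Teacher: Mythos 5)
The paper itself does not prove Theorem~\ref{T:principal-alternative-for-spaces}; it imports it from \cite{MuPiTa22}. Measured against the actual proof in that reference, your write-up reduces part (ii) to part (i) correctly (the duality bookkeeping $(L(X))'=L(X')$, $\varphi_{X'}(t)=t/\varphi_X(t)$, $Z\hookrightarrow W\iff W'\hookrightarrow Z'$ is sound, as is the first branch of (i) via the equivalence between $L^A\hookrightarrow L^B$ and $\varphi_{L^B}\lesssim\varphi_{L^A}$ near zero). But the second branch of (i) — the only nontrivial part of the theorem — is not proved. You reduce it to the ``decisive claim'' that a smallest Orlicz space $Z_0\supseteq Y$ must satisfy $\varphi_{Z_0}\approx\varphi_Y$, and you then openly defer the construction that would establish it, flagging it yourself as ``the main obstacle'' and ``genuinely technical.'' That construction is the entire content of the theorem; without it the argument is a plan, not a proof.

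Moreover, the specific plan you sketch is harder than what is needed and is likely not executable as stated: you ask for a single Orlicz space $L^B$ with $Y\subseteq L^B\subsetneq Z_0$, and you correctly observe that containment of $Y$ in $L^B$ cannot be read off from $\varphi_Y$ alone, which leaves you needing ``finer distributional information'' you do not have. The standard route sidesteps this. One proves a purely Young-function lemma: given any Orlicz space $L^A$ with $\varphi_{L^A}\lesssim\varphi_Y$, there is another Orlicz space $L^B$ with $M(Y)\subseteq L^B$ (a condition that \emph{is} decidable from $\varphi_Y$ alone, via the envelope $g^*(t)\varphi_Y(t)\le\|g\|_Y$) and $L^A\cap L^B\subseteq L(Y)$; one steepens $B$ exactly where $A$ is flat relative to the fundamental Young function so that $\max(A,B)$ is equivalent to it near infinity. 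If a smallest $Z_0\supseteq Y$ existed, minimality would give $Z_0\subseteq L^B$, hence $Z_0\subseteq Z_0\cap L^B\subseteq L(Y)$ and $Y\subseteq L(Y)$, the desired contradiction — no strictly smaller single Orlicz space, and no information about $Y$ beyond $Y\subseteq M(Y)$, is required. As it stands, your proposal has a genuine gap at precisely this point.
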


Next, we specify the principal alternative to Sobolev embeddings. The following theorem is an adjustment to \citet[Theorem 4.1]{MuPiTa22}, its proof is a simple modification of the proof in the original paper and therefore is omitted.

\begin{theorem}[principal alternative for Sobolev embeddings] \label{T:principal-alternative-sobolev-embeddings}
    Let $m$, $n \in \mathbb{N}$, $\alpha \in [\tfrac{1}{n'},1)$, and let $\| \cdot \|_{X(0,1)}$, $\| \cdot \|_{Y(0,1)}$ be r.i.~norms.
    \begin{enumerate}
        \item \label{en:PA-sobolev-embeddings-domain} If $X$ is the largest among all r.i.~spaces rendering embedding \eqref{E:sobolev} true for every $\Omega \in \mathcal{J}_\alpha$, then either $L(X)(0,1) \subset X(0,1)$ and $L^A = L(X)$ is the largest Orlicz space such that
        \begin{equation*}
            W^{m}L^A(\Omega) \hookrightarrow Y(\Omega)
        \end{equation*}
        holds for every $\Omega \in \mathcal{J}_\alpha$, or no such largest Orlicz space exists.
        \item \label{en:PA-sobolev-embeddings-target} If $Y$ is the smallest among all r.i.~spaces rendering embedding \eqref{E:sobolev} true for every $\Omega \in \mathcal{J}_\alpha$, then either $Y(0,1) \subset L(Y)(0,1)$ and $L^B = L(Y)$ is the smallest Orlicz space such that
        \begin{equation*}
            W^{m}X(\Omega) \hookrightarrow L^B(\Omega)
        \end{equation*}
        holds for every $\Omega \in \mathcal{J}_\alpha$, or no such smallest Orlicz space exists.
    \end{enumerate}
\end{theorem}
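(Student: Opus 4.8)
The plan is to derive this from the principal alternative for abstract r.i.~spaces, Theorem~\ref{T:principal-alternative-for-spaces}, by combining it with the reduction principle (Theorem~\ref{T:reduction-principle-mazya}) and the explicit description of the optimal r.i.~domain space in~\eqref{E:optimal-ri-domain}. I treat the two parts symmetrically; I describe part~\ref{en:PA-sobolev-embeddings-domain} in detail, the target case being dual.

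For part~\ref{en:PA-sobolev-embeddings-domain}: by Theorem~\ref{T:reduction-principle-mazya}, an Orlicz space $L^A$ renders $W^mL^A(\Omega)\hookrightarrow Y(\Omega)$ true for every $\Omega\in\mathcal J_\alpha$ if and only if the one-dimensional inequality~\eqref{E: reduction-principle-mazya-inequality} holds with $X(0,1)$ replaced by $L^A(0,1)$; the same reduction applies to $X$ itself. Hence $X$ being the largest r.i.~space for which~\eqref{E:sobolev} holds over $\mathcal J_\alpha$ is precisely the statement that $X(0,1)$ is the largest r.i.~space satisfying~\eqref{E: reduction-principle-mazya-inequality}, i.e.~$X$ is exactly the operator-induced space described in~\eqref{E:optimal-ri-domain} (with $I_\Omega(s)$ replaced by the model profile $s^{1-m(1-\alpha)}$, matching $\mathcal J_\alpha$). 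Thus the problem of finding the largest Orlicz space $L^A$ making the Sobolev embedding hold over $\mathcal J_\alpha$ coincides with the problem of finding the largest Orlicz space contained in this particular r.i.~space $X$. Now apply Theorem~\ref{T:principal-alternative-for-spaces}\ref{en:PA-spaces-domain}: either $L(X)\subset X$, in which case $L(X)$ is the largest Orlicz space contained in $X$ and hence (via the reduction principle again, read backwards) $L^A=L(X)$ is the largest Orlicz space for which $W^mL^A(\Omega)\hookrightarrow Y(\Omega)$ holds for every $\Omega\in\mathcal J_\alpha$; or $L(X)\not\subset X$, in which case no largest Orlicz space contained in $X$ exists, and correspondingly no largest admissible Orlicz-Sobolev domain exists. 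The only subtlety here is that the fundamental function is preserved under the reduction, so that ``$L(X)$'' computed on the Sobolev side matches ``$L(X)$'' computed on the one-dimensional side; this follows since the correspondence $X\leftrightarrow X_{m,\alpha}$ (and its inverse) acts predictably on fundamental functions, and $L(\cdot)$ depends only on $\varphi$.

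Part~\ref{en:PA-sobolev-embeddings-target} is the dual argument. If $Y$ is the smallest r.i.~space rendering $W^mX(\Omega)\hookrightarrow Y(\Omega)$ true over $\mathcal J_\alpha$, then by Theorem~\ref{T:optimal-ri-target-mazya}, $Y=X_{m,\alpha}$, the optimal r.i.~target attached to $X$. An Orlicz space $L^B$ satisfies $W^mX(\Omega)\hookrightarrow L^B(\Omega)$ for all $\Omega\in\mathcal J_\alpha$ precisely when $Y\hookrightarrow L^B$ (using optimality of $Y$ in one direction and the reduction principle in the other), so finding the smallest admissible Orlicz target is exactly finding the smallest Orlicz space containing $Y$. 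Theorem~\ref{T:principal-alternative-for-spaces}\ref{en:PA-spaces-target} then gives the dichotomy: either $Y\subset L(Y)$ and $L(Y)$ is that smallest Orlicz space, or no smallest Orlicz space containing $Y$ exists.

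The main obstacle is the bookkeeping needed to make ``largest Orlicz space making the Sobolev embedding true'' literally equivalent to ``largest Orlicz space contained in the optimal r.i.~domain $X$'': one must verify that the reduction principle transfers the ordering faithfully in both directions, i.e.~that $L^A$ admissible $\iff L^A\subset X$ as sets of functions, not merely that the optimal r.i.~space is $X$. This is where the hypothesis that $X$ (resp.~$Y$) is genuinely the optimal r.i.~space, rather than just some admissible one, is essential, and it is the point at which the argument for the original Lipschitz/John case in~\cite{MuPiTa22} has to be reread to confirm nothing beyond the Maz'ya reduction principle of~\cite{CiPiSl15} is used. Once that equivalence is in hand, the statement is an immediate corollary of Theorem~\ref{T:principal-alternative-for-spaces}, which is why the proof is omitted in the text.
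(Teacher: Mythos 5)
Your argument is correct and is essentially the one the paper intends (the paper omits the proof, deferring to \cite{MuPiTa22}): the optimality hypothesis on $X$ (resp.\ $Y$) makes ``$L^A$ renders the embedding true over $\mathcal J_\alpha$'' equivalent to ``$L^A\hookrightarrow X$'' (resp.\ ``$Y\hookrightarrow L^B$''), after which Theorem~\ref{T:principal-alternative-for-spaces} applies verbatim. The only comment is that your detour through the reduction principle and \eqref{E:optimal-ri-domain} is unnecessary, and with it the worry about fundamental functions being ``preserved under the reduction'': the forward implication is just $L^A\hookrightarrow X \Rightarrow W^mL^A\hookrightarrow W^mX\hookrightarrow Y$, the reverse implication is literally the definition of $X$ being the largest admissible r.i.~space applied to the r.i.~space $L^A$, and $L(X)$ depends only on $\varphi_X$, which is unambiguous.
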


\chapter{Main results}

First, we shall present a general result concerning the fundamental function of an operator-induced space.

\begin{theorem}[fundamental function of an operator-induced space]\label{T:fundamental-optimal-domain-hardy} \ \\
Let $I\colon[0,1] \rightarrow [0,\infty)$ be a non-decreasing measurable function such that the function $\left( (0,1) \ni t \mapsto \int_t^1 \frac{1}{I(s)} \,\mathrm{d}s \, \right)$ belongs to $Y(0,1)$, where $Y(0,1)$ is an r.i.~space such that
\begin{equation}\label{E:fundamental-optimal-domain-hardy-pres}
    \lim_{t \to 0^+}{\varphi_Y(t)} = 0,
\end{equation}
and
\begin{equation}\label{E:fundamental-optimal-domain-hardy-presup}
    \int_{0}^{t} \sup_{\tau \in (s,1)} \frac{\varphi_{Y}(\tau)}{I(\tau)} \, \mathrm{d}s \lesssim t \sup_{s \in (t,1)} \frac{\varphi_{Y}(\frac{s}{2})}{s} \int_{\frac{s}{2}}^s \frac{\mathrm{d} \tau}{I(\tau)} , \quad t \in (0,1).
\end{equation}
Let $X(0,1)$ be defined by
\begin{equation*}
    \left\Vert f \right\Vert_{X(0,1)} = \sup_{h} \left\Vert \int_{\tau}^1 \frac{h(s)}{I(s)} \mathrm{d}s \right\Vert_{Y(0,1)} \quad \mathrm{for}~ f \in \mathcal{M}(0,1),
\end{equation*}
where the supremum is extended over all $h \in \mathcal{M}_+(0,1)$ such that $h^* = f^*$. Then $X(0,1)$ is an r.i.~space, and one has
\begin{equation*}
    \varphi_X(t) \approx t \sup_{s \in (t,1)} \frac{\varphi_Y(\frac{s}{2})}{s} \int_{\frac{s}{2}}^s \frac{\mathrm{d}\tau}{I(\tau)} \quad \mathrm{for}~t \in (0,1).
\end{equation*}

\end{theorem}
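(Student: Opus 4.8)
The fundamental function is $\varphi_X(t)=\|\chi_{(0,t)}\|_{X(0,1)}$, and the functions $h\in\mathcal M_+(0,1)$ with $h^*=\chi_{(0,t)}$ are exactly the characteristic functions $\chi_F$ of measurable sets $F\subseteq(0,1)$ with $|F|=t$. Thus
\[
\varphi_X(t)=\sup_{|F|=t}\bigl\|\,\tau\mapsto\int_\tau^1\tfrac{\chi_F(s)}{I(s)}\,\mathrm{d}s\,\bigr\|_{Y(0,1)}=\sup_{|F|=t}\|g_F\|_{Y(0,1)},
\]
where each $g_F$ is non-increasing, so $g_F^*=g_F$. The plan is to establish the two matching bounds: the lower one by a concrete choice of $F$, the upper one by duality in $Y$.

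For the lower bound I would fix $s\in(t,1)$ and take $F=(s/2,\,s/2+t)$ (admissible at least when $t\le\tfrac12$; the remaining $t$ affect $\varphi_X$ only near $1$ and are dealt with by an obvious modification). On $(0,s/2)$ the function $g_F$ is the constant $c_s:=\int_{s/2}^{s/2+t}\mathrm{d}\sigma/I(\sigma)$, hence $\|g_F\|_Y\ge c_s\,\varphi_Y(s/2)$. Partitioning $(s/2,s)$ into $\lceil(s/2)/t\rceil\le s/t$ consecutive intervals of length $t$ and using that $1/I$ is non-increasing gives $c_s\ge\tfrac ts\int_{s/2}^s\mathrm{d}\sigma/I(\sigma)$ when $s\ge2t$, while for $t<s<2t$ the same follows at once since then $s/2+t\ge s$. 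Taking the supremum over $s$ yields $\varphi_X(t)\ge t\sup_{s\in(t,1)}\tfrac{\varphi_Y(s/2)}{s}\int_{s/2}^s\mathrm{d}\tau/I(\tau)$.

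For the upper bound I would dualize. Since $g_F$ is non-increasing, $\|g_F\|_Y=\sup\{\int_0^1 g_Fk:\ k=k^*\in\mathcal M_+(0,1),\ \|k\|_{Y'(0,1)}\le1\}$, and Fubini rewrites $\int_0^1 g_F k$ as $\int_F K(s)/I(s)\,\mathrm{d}s$ with $K(s):=\int_0^s k$; interchanging the two suprema,
\[
\varphi_X(t)=\sup_{\|k\|_{Y'}\le1}\int_0^t\Bigl(\tfrac{K}{I}\Bigr)^{\!*}(\sigma)\,\mathrm{d}\sigma .
\]
Hölder's inequality yields the pointwise estimate $K(s)\le\int_0^s k^*=\sup_{|E|=s}\int_E k\le\|k\|_{Y'}\varphi_Y(s)\le\varphi_Y(s)$, so $(K/I)^*\le(\varphi_Y/I)^*$, and since $(\varphi_Y/I)^*\le\bigl(\sigma\mapsto\sup_{\tau\in(\sigma,1)}\varphi_Y(\tau)/I(\tau)\bigr)$ a.e.\ (the latter being a non-increasing majorant of $\varphi_Y/I$, differing from it only on the countable jump set of $I$),
\[
\varphi_X(t)\le\int_0^t\Bigl(\tfrac{\varphi_Y}{I}\Bigr)^{\!*}(\sigma)\,\mathrm{d}\sigma\le\int_0^t\sup_{\tau\in(\sigma,1)}\tfrac{\varphi_Y(\tau)}{I(\tau)}\,\mathrm{d}\sigma\lesssim t\sup_{s\in(t,1)}\tfrac{\varphi_Y(s/2)}{s}\int_{s/2}^s\tfrac{\mathrm{d}\tau}{I(\tau)},
\]
the last inequality being precisely hypothesis \eqref{E:fundamental-optimal-domain-hardy-presup}. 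Together with the lower bound this gives the claimed equivalence.

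The assertion that $X(0,1)$ is an r.i.\ space I would treat separately: the identity $\|f\|_X=\sup_{\|k\|_{Y'}\le1}\int_0^1 f^*(s)(K/I)^{*}(s)\,\mathrm{d}s$ (derived as above) exhibits $\|\cdot\|_X$ as a supremum of Lorentz-type functionals, $\|1\|_X=\|\int_\cdot^1\mathrm{d}s/I\|_Y<\infty$ is the standing hypothesis on $I$, properties \ref{(P1)}--\ref{(P3)} and \ref{(P6)} are routine, and \ref{(P5)} (equivalently $X(0,1)\hookrightarrow L^1(0,1)$) is where \eqref{E:fundamental-optimal-domain-hardy-pres} enters; this is essentially the optimal-domain construction underlying Theorem~\ref{T:optimal-ri-domain-mazya}. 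I expect the upper bound to be the main obstacle: one must see that replacing $K(s)$ by the uniform majorant $\varphi_Y(s)$, although seemingly wasteful, loses exactly the quantity that \eqref{E:fundamental-optimal-domain-hardy-presup} controls, and that the sliding term $\int_{s/2}^s\mathrm{d}\tau/I(\tau)$ in the target formula reappears, on the lower-bound side, through the elementary partition estimate for $1/I$. A secondary nuisance is the careful bookkeeping of the r.i.-space axioms and of the measure-zero discrepancies caused by jumps of $I$.
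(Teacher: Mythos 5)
Your argument is correct and reaches the theorem's two intermediate estimates --- the lower bound $\varphi_X(t)\ge t\sup_{s\in(t,1)}\frac{\varphi_Y(s/2)}{s}\int_{s/2}^s\frac{\mathrm{d}\tau}{I(\tau)}$ and the upper bound $\varphi_X(t)\le\int_0^t\sup_{\tau\in(s,1)}\frac{\varphi_Y(\tau)}{I(\tau)}\,\mathrm{d}s$, after which \eqref{E:fundamental-optimal-domain-hardy-presup} closes the gap exactly as in the paper --- but it implements both halves differently. For the lower bound the paper tests only with $h=\chi_{(0,t)}$, obtaining the single estimate $\varphi_X(t)\ge\varphi_Y(\tfrac t2)\int_{t/2}^t\frac{\mathrm{d}s}{I(s)}$, and then upgrades it to the supremum over $s\in(t,1)$ for free by quasiconcavity of $\varphi_X$ (the map $t\mapsto\varphi_X(t)/t$ is non-increasing); your translated test sets $F=(s/2,s/2+t)$ together with the partition estimate for the non-increasing function $1/I$ achieve the same thing by hand. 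The quasiconcavity trick is shorter and disposes of your two loose ends at once: it needs no monotonicity of $I$ at that step and no separate treatment of $t>\tfrac12$ (though your ``obvious modification'' does work, since for $t\ge\tfrac12$ both sides are comparable to their values at $\tfrac12$). For the upper bound the paper estimates $\|\cdot\|_{Y}\lesssim\|\cdot\|_{\Lambda(Y)}$ via the sandwich \eqref{E:Lorentz-Marcinkiewicz-Sandwich} and computes $\int_0^1 f^*\varphi_Y'$ by Fubini --- this is precisely where hypothesis \eqref{E:fundamental-optimal-domain-hardy-pres} is used, to kill the $\varphi_Y(0_+)\|f\|_\infty$ term in the Lorentz norm; your duality route through $Y=Y''$ and the H\"older bound $K(s)\le\varphi_Y(s)$ avoids \eqref{E:fundamental-optimal-domain-hardy-pres} in this estimate altogether (it then only enters, as you say, in verifying that $X$ is an r.i.~space, which the paper simply delegates to Theorem~\ref{T:optimal-ri-domain-mazya}). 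Your parenthetical about the countable jump set of $I$ when passing from $(\varphi_Y/I)^*$ to the non-increasing majorant $\sup_{\tau\in(\sigma,1)}\varphi_Y(\tau)/I(\tau)$ is the right thing to say and is harmless since rearrangements ignore null sets. In short: same skeleton, two genuinely different mechanisms, each trading a soft structural fact (quasiconcavity, the $\Lambda(Y)$ endpoint) for a concrete computation (translated intervals, duality).
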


\begin{proof}
    The fact that $X$ is an r.i.~space follows from Theorem \ref{T:optimal-ri-domain-mazya}. First, we will examine the lower bound of $\varphi_X$. Fix $t \in (0,1)$. By the boundedness of the dilation operator on $Y$, we get
    \begin{equation*}
    \begin{split}
        \varphi_X(t) & = \| \chi_{[0,t)} \|_{X(0,1)} \geq \left\Vert \int_{\tau}^1 \frac{\chi_{(0,t)}(s)}{I(s)} \mathrm{d}s \right\Vert_{Y(0,1)} \geq \left\Vert \chi_{(0,\frac{t}{2})}(\tau) \int_{\tau}^t \frac{\mathrm{d}s}{I(s)} \right\Vert_{Y(0,1)} \\
        & \geq \left\Vert \chi_{(0,\frac{t}{2})}(\tau) \int_{\frac{t}{2}}^t \frac{\mathrm{d}s}{I(s)} \right\Vert_{Y(0,1)} = \varphi_Y(\tfrac{t}{2}) \int_{\frac{t}{2}}^t \frac{\mathrm{d}s}{I(s)},
    \end{split}
    \end{equation*}
    showing that
    \begin{equation*}
        \frac{\varphi_X(t)}{t} \geq \frac{\varphi_Y(\tfrac{t}{2})}{t} \int_{\frac{t}{2}}^t \frac{\mathrm{d}s}{I(s)} \quad \mathrm{for~every}~t \in (0,1).
    \end{equation*}
    Since the function $t \mapsto \frac{\varphi_X(t)}{t}$ is non-increasing, it follows that
    \begin{equation*}
        \frac{\varphi_X(t)}{t} \geq \sup_{s \in (t,1)} \frac{\varphi_Y(\tfrac{s}{2})}{s} \int_{\frac{s}{2}}^s \frac{\mathrm{d}\tau}{I(\tau)} \quad \mathrm{for~every}~t\in (0,1).
    \end{equation*}
    Thus, we have obtained the lower bound of $\varphi_X$, as
    \begin{equation}\label{E:fundamental-optimal-domain-hardy-LB}
            \varphi_X(t) \geq t \sup_{s \in (t,1)} \frac{\varphi_Y(\tfrac{s}{2})}{s} \int_{\frac{s}{2}}^s \frac{\mathrm{d}\tau}{I(\tau)} \quad \mathrm{for~every}~t\in (0,1).
    \end{equation}
    Let us focus on the upper bound of $\varphi_X$. Note that since $\varphi_Y$ is locally absolutely continuous on $(0,1]$, it is differentiable a.e.~and for any measurable $f$, one has
    \begin{equation*}
        \|f\|_{\Lambda(Y)} = \varphi_Y(0_+)\|f\|_{\infty} + \int_{0}^{1} f^* \varphi'_Y.
    \end{equation*}
    Therefore, by the fundamental embedding \eqref{E:Lorentz-Marcinkiewicz-Sandwich},~\eqref{E:fundamental-optimal-domain-hardy-pres} and Fubini's theorem, we obtain that for any $h \geq 0$, it holds that
    \begin{equation*}
    \begin{split}
        \left\Vert \int_{\tau}^1 \frac{h(s)}{I(s)} \mathrm{d}s \right\Vert_{Y(0,1)} & \lesssim \left\Vert \int_{\tau}^1 \frac{h(s)}{I(s)} \mathrm{d}s \right\Vert_{\Lambda (Y(0,1))} = \int_0^1 \int _{\tau}^1 \frac{h(s)}{I(s)} \, \mathrm{d}s \, \varphi'_Y(\tau) \, \mathrm{d}\tau \\
        & = \int_0^1 \frac{h(s)}{I(s)} \int_0^s \varphi'_Y(\tau) \, \mathrm{d}\tau \, \mathrm{d}s = \int_0^1 \frac{h(s)}{I(s)} \varphi_Y(s) \, \mathrm{d}s \\
        & \leq \int_0^1 h(s) \sup_{\tau \in (s,1)} \frac{\varphi_Y(\tau)}{I(\tau)} \, \mathrm{d}s \,. 
    \end{split}
    \end{equation*}
    Hence, by the definition of $\varphi_X$ and by the Hardy-Littlewood inequality \eqref{H-L}, it holds for any $t \in (0,1)$, that
    \begin{equation*}
    \begin{split}
        \varphi_X(t) & = \| \chi_{[0,t)} \|_{X(0,1)} = \sup_{h^*=\chi_{[0,t)}} \left\Vert \int_{\tau}^1 \frac{h(s)}{I(s)} \, \mathrm{d}s \right\Vert_{Y(0,1)} \\
        & \leq \sup_{h^*=\chi_{[0,t)}} \int_0^1 h(s) \sup_{\tau \in (s,1)} \frac{\varphi_Y(\tau)}{I(\tau)} \, \mathrm{d}s \\
        & \leq \sup_{h^*=\chi_{[0,t)}} \int_0^1 h^*(s) \sup_{\tau \in (s,1)} \frac{\varphi_Y(\tau)}{I(\tau)} \, \mathrm{d}s \\
        & = \int_0^t \sup_{\tau \in (s,1)} \frac{\varphi_Y(\tau)}{I(\tau)} \, \mathrm{d}s \, .
    \end{split}
    \end{equation*}
    These inequalities give us an upper bound of $\varphi_X$, as
    \begin{equation}\label{E:fundamental-optimal-domain-hardy-UB}
        \varphi_X(t) \leq \int_0^t \sup_{\tau \in (s,1)} \frac{\varphi_Y(\tau)}{I(\tau)} \, \mathrm{d}s \quad \mathrm{for~every}~t \in (0,1).
    \end{equation}
    Thus, by combining the lower bound \eqref{E:fundamental-optimal-domain-hardy-LB} and the upper bound \eqref{E:fundamental-optimal-domain-hardy-UB} of $\varphi_X$ for $t \in (0,1)$, we obtain the estimates
    \begin{equation*} 
            t \sup_{s \in (t,1)} \frac{\varphi_Y(\tfrac{s}{2})}{s} \int_{\frac{s}{2}}^s \frac{\mathrm{d}\tau}{I(\tau)} \leq \varphi_X(t) \lesssim \int_0^t \sup_{\tau \in (s,1)} \frac{\varphi_Y(\tau)}{I(\tau)} \, \mathrm{d}s \,.
    \end{equation*}
    Finally, applying inequality \eqref{E:fundamental-optimal-domain-hardy-presup} to the estimates above, we get the desired result
    \begin{equation*}
        \varphi_X(t) \approx t \sup_{s \in (t,1)} \frac{\varphi_Y(\frac{s}{2})}{s} \int_{\frac{s}{2}}^s \frac{\mathrm{d}\tau}{I(\tau)} \quad \mathrm{for}~t \in (0,1).
    \end{equation*}
\end{proof}
The particular case where $\Omega$ is a Lipschitz domain was treated in \cite[Theorem~4.2]{MuPiTa22}.

We shall now use the result obtained in Theorem \ref{T:fundamental-optimal-domain-hardy} to extract important information about fundamental functions of optimal domain r.i.~spaces in Sobolev embeddings corresponding to given target spaces sharing the same fundamental level.

\begin{corollary} \label{T:fundamental-optimal-domain-corollary}
    Let $\Omega \subset \mathbb{R}^n$ be with isoperimetric profile $I_{\Omega}$. Suppose that $Y_1$, $Y_2$ are r.i.~spaces over $\Omega$ on the same fundamental level, i.e. $\varphi_{Y_1} \approx \varphi_{Y_2}$, and \eqref{E:fundamental-optimal-domain-hardy-pres} and \eqref{E:fundamental-optimal-domain-hardy-presup} hold for both $Y_1$ and $Y_2$ in place of $Y$. Let $X_j$ be the optimal r.i.~domain spaces in the embedding
    \begin{equation*}
        W^m X_j(\Omega) \hookrightarrow Y_j(\Omega),
    \end{equation*}
    for $j = 1,2$. Then $X_1$, $X_2$ are also on the same fundamental level, i.e. $\varphi_{X_1} \approx \varphi_{X_2}$.
\end{corollary}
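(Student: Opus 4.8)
The plan is to identify both optimal domain spaces $X_1$ and $X_2$ as operator-induced spaces of exactly the type treated in Theorem~\ref{T:fundamental-optimal-domain-hardy}, built from one and the same kernel $I$, and then to read off the claim from the explicit equivalence for the fundamental function proved there.

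First I would record that the optimal r.i.~domain space in $W^mX_j(\Omega)\hookrightarrow Y_j(\Omega)$ is the one furnished by Theorem~\ref{T:optimal-ri-domain-mazya} (equivalently, by formula~\eqref{E:optimal-ri-domain}): its representation norm is
\begin{equation*}
    \|f\|_{X_j(0,1)} = \sup_{h\sim f}\left\Vert \int_\tau^1 \frac{h(s)}{I(s)}\,\mathrm{d}s\right\Vert_{Y_j(0,1)},\qquad f\in\mathcal{M}(0,1),
\end{equation*}
where the kernel $I$ is a non-decreasing function on $(0,1)$ determined by $\Omega$ (and, where relevant, by the order $m$ and by $\alpha$) but \emph{not} by $j$. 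This is precisely the structure required by Theorem~\ref{T:fundamental-optimal-domain-hardy}. Since each $X_j$ is assumed to be an r.i.~space, in particular $1=\chi_{[0,1)}^{*}\in X_j(0,1)$, whence $\bigl\Vert \int_\tau^1 I(s)^{-1}\,\mathrm{d}s\bigr\Vert_{Y_j(0,1)}=\|1\|_{X_j(0,1)}<\infty$; thus the function $t\mapsto\int_t^1 I(s)^{-1}\,\mathrm{d}s$ belongs to $Y_j(0,1)$, and — together with the hypotheses~\eqref{E:fundamental-optimal-domain-hardy-pres} and~\eqref{E:fundamental-optimal-domain-hardy-presup}, which are assumed for each $Y_j$ — all hypotheses of Theorem~\ref{T:fundamental-optimal-domain-hardy} are in force with $Y_j$ in the role of $Y$.

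Next I would apply Theorem~\ref{T:fundamental-optimal-domain-hardy} twice, for $j=1$ and $j=2$, obtaining
\begin{equation*}
    \varphi_{X_j}(t)\approx t\sup_{s\in(t,1)}\frac{\varphi_{Y_j}(\tfrac s2)}{s}\int_{\frac s2}^s\frac{\mathrm{d}\tau}{I(\tau)},\qquad t\in(0,1),\ j=1,2.
\end{equation*}
Because $\varphi_{Y_1}\approx\varphi_{Y_2}$ there is a constant $c\ge1$ with $c^{-1}\varphi_{Y_2}(u)\le\varphi_{Y_1}(u)\le c\,\varphi_{Y_2}(u)$ for all $u\in[0,1]$; substituting $u=\tfrac s2$ inside the supremum, the two right-hand sides above are equivalent with constant $c$. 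Hence $\varphi_{X_1}(t)\approx\varphi_{X_2}(t)$ on $(0,1)$; since fundamental functions vanish at $0$ and, being quasiconcave, are continuous on $(0,1]$, this equivalence extends to all of $[0,1]$, so $X_1$ and $X_2$ lie on the same fundamental level.

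I expect the only real obstacle to lie in the bookkeeping of the first step: one must be sure that the optimal domain space genuinely has the operator-induced form of Theorem~\ref{T:fundamental-optimal-domain-hardy} with a kernel $I$ common to both targets, and that $t\mapsto\int_t^1 I^{-1}$ indeed sits in each $Y_j$ (which, as noted, is automatic from $X_j$ being a bona fide r.i.~space). Once this is settled, no new estimate is needed — the conclusion follows at once from the pointwise formula for $\varphi_{X_j}$ and the hypothesis $\varphi_{Y_1}\approx\varphi_{Y_2}$.
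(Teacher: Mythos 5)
Your proposal is correct and takes essentially the same route as the paper: the paper's own proof is a one-line appeal to the operator-induced form of the optimal domain norm \eqref{E:optimal-ri-domain} combined with the fundamental-function formula of Theorem~\ref{T:fundamental-optimal-domain-hardy}, which is exactly what you carry out in detail. The extra details you supply (that the kernel is the same for $j=1,2$, and that $t\mapsto\int_t^1 I(s)^{-1}\,\mathrm{d}s\in Y_j$ follows automatically from $\|1\|_{X_j(0,1)}<\infty$ via \ref{(P4)}) are sound and merely make explicit what the paper leaves implicit.
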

\begin{proof}
    The proof follows immediately from the formula of the norm in the optimal r.i.~domain space \eqref{E:optimal-ri-domain} and by Theorem \ref{T:fundamental-optimal-domain-hardy}.
\end{proof}

The presuppositions of Theorem~\ref{T:fundamental-optimal-domain-hardy}, namely \eqref{E:fundamental-optimal-domain-hardy-presup}, aren't particularly flexible, as there possibly exist classes of isoperimetric functions for which the presupposition is needlessly strong, or even entirely unnecessary. We aim to prove that for Maz'ya classes of domains, Theorem \ref{T:fundamental-optimal-domain-hardy} may be applied to their isoperimetric function directly, omitting the need for presupposition \eqref{E:fundamental-optimal-domain-hardy-presup} entirely. To prove this, we must find a work-around, so we do not need to consider every domain in $\mathcal{J}_\alpha$.

In order to obtain the necessary condition for embedding \eqref{E:sobolev} for some $\alpha \in [\tfrac{1}{n'},1)$ in the reduction principle (Theorem~\ref{T:reduction-principle-mazya}), we need that the embedding holds for every $\Omega \in \mathcal{J}_\alpha$. We will however point out that it suffices to focus on the worst domain with such property, denoted $\Omega_\alpha$. To describe this domain, we use \cite[Proposition 10.1 (i)]{CiPiSl15}, which follows. In the statement, $\omega_{n-1}$ denotes the Lebesgue measure of the unit ball in $\mathbb{R}^{n-1}$.
\begin{proposition} \label{T: worst-domain}
    Let $n \in \mathbb{N}$, $n \geq 2$, $\alpha \in [\tfrac{1}{n'},1)$. Define $\eta_\alpha\colon [0, \tfrac{1}{1-\alpha}] \rightarrow [0,\infty)$ as
    \begin{equation*}
        \eta_\alpha(t) = \omega_{n-1}^{-\frac{1}{n-1}} \, (1 - (1-\alpha)t)^{\frac{\alpha}{(1-\alpha)(n-1)}} \quad \mathrm{for~} t \in [0, \tfrac{1}{1-\alpha}].
    \end{equation*}
    Let $\Omega_\alpha$ be the Euclidean domain in $\mathbb{R}^n$ given by
    \begin{equation*}
        \Omega_\alpha = \left\{ (x, x_n) \in \mathbb{R}^n\colon x \in \mathbb{R}^{n-1},~0 < x_n < \tfrac{1}{1-\alpha},~|x| < \eta_\alpha(x_n) \right\}.
    \end{equation*}
    Then $|\Omega_\alpha| = 1$, and
    \begin{equation}\label{E:worst-domain}
        I_{\Omega_\alpha}(t) \approx t^\alpha \quad \mathrm{for~} t \in [0, \tfrac{1}{2}].
    \end{equation}
\end{proposition}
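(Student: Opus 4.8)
The plan is to prove the two assertions separately. The normalization $\abs{\Omega_\alpha}=1$ is a direct computation, and the estimate $I_{\Omega_\alpha}(t)\approx t^\alpha$ splits into an easy upper bound coming from an explicit one‑parameter family of competitors and a genuinely harder lower bound, which carries the bulk of the work.

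\textbf{Normalization and upper bound.} Since $\Omega_\alpha$ is the solid of revolution about the $x_n$‑axis whose cross‑section at height $x_n$ is an $(n-1)$‑ball of radius $\eta_\alpha(x_n)$, Fubini's theorem gives
\[
  \abs{\Omega_\alpha}=\omega_{n-1}\int_0^{\frac{1}{1-\alpha}}\eta_\alpha(x_n)^{n-1}\,\mathrm{d}x_n=\int_0^{\frac{1}{1-\alpha}}\bigl(1-(1-\alpha)x_n\bigr)^{\frac{\alpha}{1-\alpha}}\,\mathrm{d}x_n,
\]
and the substitution $u=1-(1-\alpha)x_n$ turns the last integral into $\tfrac{1}{1-\alpha}\int_0^1 u^{\frac{\alpha}{1-\alpha}}\,\mathrm{d}u=1$, using $\tfrac{\alpha}{1-\alpha}+1=\tfrac{1}{1-\alpha}$. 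For the upper bound I would test with the horizontal slices $E_s=\Omega_\alpha\cap\{x_n>s\}$, $s\in[0,\tfrac1{1-\alpha}]$: the same computation with lower limit $s$ gives $\abs{E_s}=\bigl(1-(1-\alpha)s\bigr)^{\frac{1}{1-\alpha}}$, while the reduced boundary of $E_s$ relative to $\Omega_\alpha$ is exactly the flat cross‑section $\{x_n=s\}\cap\Omega_\alpha$, so
\[
  P(E_s,\Omega_\alpha)=\omega_{n-1}\eta_\alpha(s)^{n-1}=\bigl(1-(1-\alpha)s\bigr)^{\frac{\alpha}{1-\alpha}}=\abs{E_s}^{\alpha}.
\]
As $s$ ranges over $[0,\tfrac1{1-\alpha}]$, $\abs{E_s}$ ranges over $[0,1]$; choosing $s$ with $\abs{E_s}=t$ for a given $t\in(0,\tfrac12]$ yields $I_{\Omega_\alpha}(t)\le t^\alpha$.

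\textbf{Lower bound.} It remains to show $\Omega_\alpha\in\mathcal J_\alpha$, i.e.\ $P(E,\Omega_\alpha)\gtrsim\abs{E}^{\alpha}$ for every set $E\subset\Omega_\alpha$ of finite perimeter with $\abs{E}\le\tfrac12$ (which gives $I_{\Omega_\alpha}(t)\gtrsim t^\alpha$ since $\abs{E}^\alpha$ is increasing in $\abs{E}$). First I would spherically symmetrize $E$ in the variables $x'\in\R^{n-1}$, replacing each slice $E\cap(\R^{n-1}\times\{x_n\})$ by the concentric $(n-1)$‑ball of the same $\mathcal H^{n-1}$‑measure; since $\Omega_\alpha$ is itself rotationally symmetric with ball cross‑sections, this preserves $\abs{E}$ and does not increase $P(\,\cdot\,,\Omega_\alpha)$, so one may assume $E=\{(x',x_n):0<x_n<\tfrac1{1-\alpha},\ \abs{x'}<r(x_n)\}$ with $0\le r\le\eta_\alpha$ measurable. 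For such a radial subgraph, $\abs{E}=\omega_{n-1}\int_0^{\frac1{1-\alpha}}r^{n-1}$, while $P(E,\Omega_\alpha)$ bounds from below both the lateral area lying in the interior of $\Omega_\alpha$ (a one‑dimensional term controlling the variation of $r^{n-1}$ there) and the horizontal caps at jumps of $r$ (terms $\omega_{n-1}r^{n-1}$). I would then split according to where the mass of $E$ sits: if a definite fraction of $\abs{E}$ lies in $\{x_n\le\tfrac{1}{2(1-\alpha)}\}$, where $\eta_\alpha$ is pinched between two positive constants and $\Omega_\alpha$ is bi‑Lipschitz to a fixed cylinder, a local Euclidean relative isoperimetric inequality gives $P(E,\Omega_\alpha)\gtrsim\abs{E}^{1/n'}\ge\abs{E}^{\alpha}$, the last inequality using $\alpha\ge\tfrac1{n'}$ and $\abs{E}\le1$; otherwise most of the mass is near the cusp, where monotonicity of $\eta_\alpha$, the caps/lateral estimate, and the volume–height relation $\abs{E_s}=(1-(1-\alpha)s)^{1/(1-\alpha)}$ together force $P(E,\Omega_\alpha)\gtrsim\abs{E}^{\alpha}$ — essentially, the cheapest way to enclose a prescribed volume near the cusp is a horizontal slice, which we already computed gives exactly $\abs{E}^\alpha$.

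\textbf{Main obstacle.} The crux is this lower bound. Two points need care: the reduction to radial subgraphs must be justified at the level of sets of finite perimeter and of the \emph{relative} perimeter $P(\,\cdot\,,\Omega_\alpha)$ — a standard but nontrivial symmetrization fact, to be invoked carefully because $\Omega_\alpha$ degenerates (becomes arbitrarily thin) near its tip — and the one‑dimensional estimate must be organized so that the exponent is exactly $\alpha$ uniformly in $t\in(0,\tfrac12]$, which is precisely where $\alpha\ge\tfrac1{n'}$ is used to glue the ``thick'' bottom regime to the ``thin'' cusp regime (for $\alpha<\tfrac1{n'}$ the construction would instead yield the profile $t^{1/n'}$ near zero). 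As an alternative to the hands‑on argument, one may simply quote the general description of the isoperimetric function of a rotationally symmetric domain in terms of its profile function, which, applied to $\eta_\alpha$, gives $I_{\Omega_\alpha}(t)\approx t^\alpha$ at once.
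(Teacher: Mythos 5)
Your normalization computation and your upper bound $I_{\Omega_\alpha}(t)\le t^\alpha$ via the horizontal slices $E_s$ are correct and complete (indeed $P(E_s,\Omega_\alpha)=|E_s|^\alpha$ exactly). For context: the paper does not prove this proposition at all; it quotes it verbatim from Cianchi--Pick--Slavíková, Proposition~10.1(i), whose proof in turn rests on Maz'ya's analysis of such domains of revolution. So your closing fallback --- ``simply quote the general description of the isoperimetric function of a rotationally symmetric domain'' --- is in effect what the paper does, and is the only part of your lower-bound discussion that is safe as written.

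The genuine gap is the first step of your lower bound. Schwarz symmetrization of the slices $E\cap\{x_n=c\}$ onto concentric balls about the axis does \emph{not} leave the relative perimeter $P(\cdot,\Omega_\alpha)$ non-increasing: boundary that was ``free'' (lying on the lateral surface $\partial\Omega_\alpha$, hence not counted) can be converted into interior boundary, which is counted. Already in a cylinder this fails: for $\Omega=(-1,1)\times(0,1)\subset\R^2$ and $E=(0,1)\times(0,1)$ one has $P(E,\Omega)=1$, while the axially symmetrized set $(-\tfrac12,\tfrac12)\times(0,1)$ has relative perimeter $2$. This reflects the fact that relative isoperimetric minimizers in tube-like domains sit against the lateral boundary (caps and half-balls), not around the axis, so the reduction to radial subgraphs is not a ``standard but nontrivial symmetrization fact'' to be invoked carefully --- it is false, and without it your caps/lateral bookkeeping applies only to a special subclass of competitors and yields no lower bound for general sets of finite perimeter. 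Moreover, even granting the reduction, the cusp-regime claim (``the cheapest way to enclose volume near the tip is a horizontal slice'') is exactly the content of the lower bound and is asserted rather than proved. A route that does work is Maz'ya's: project onto the $x_n$-axis and reduce, via the coarea formula and the relative isoperimetric inequality on the uniformly Lipschitz truncations $\Omega_\alpha\cap\{x_n<T\}$, to a one-dimensional weighted inequality for the volume--height function $s\mapsto|E\cap\{x_n>s\}|$; this is what underlies the quoted Proposition~10.1(i).
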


The best case scenario happens when $\alpha = \frac{1}{n'}$ and we are dealing with John domains. For example, if $n = 2$ and $\alpha = \frac{1}{2}$, then the function $\eta_\alpha$ takes the form
\begin{equation*}
    \eta_\frac{1}{2}(t) = \tfrac{1}{2}(1-\tfrac{1}{2}t)
\end{equation*}
and, as a result,
\begin{equation*}
    \Omega_\alpha = \{ x_1 \in (0,2),~|x_2| < \tfrac{1}{2}(1-\tfrac{1}{2}x_1) \}.
\end{equation*}
In other words, $\Omega_\alpha$ takes the form of a triangle with vertices (denoted by $[x_1, x_2]$) $[0,\tfrac{1}{2}]$, $[0,-\tfrac{1}{2}]$, and $[2,0]$.  We may easily verify that $|\Omega_\alpha| = 1$. The domains start to get worse as we choose $\alpha > \tfrac{1}{2}$. It is on these domains $\Omega_\alpha$ that we consider the following corollary.

\begin{corollary}\label{C:no-need-for-condition}
    Let $n \in \mathbb{N}$, $n \geq 2$, $\alpha \in [\tfrac{1}{n'},1)$. Define $\Omega_\alpha$ as in Proposition \ref{T: worst-domain}. Then, for $I_{\Omega_\alpha}(t)$, Theorem \ref{T:fundamental-optimal-domain-hardy} and Corollary \ref{T:fundamental-optimal-domain-corollary} are applicable without the restriction \eqref{E:fundamental-optimal-domain-hardy-presup}.
\end{corollary}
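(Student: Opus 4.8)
The plan is to use that, by \eqref{E:worst-domain}, the isoperimetric profile of the worst domain $\Omega_\alpha$ is equivalent near zero to the pure power $t^{\alpha}$, with $\alpha\in[\tfrac{1}{n'},1)\subset(0,1)$, and to show that for \emph{any} power-type profile the presupposition \eqref{E:fundamental-optimal-domain-hardy-presup} is satisfied automatically, for every r.i.~space $Y$ obeying \eqref{E:fundamental-optimal-domain-hardy-pres}. Concretely I would work with the non-decreasing function $I(t):=t^{\gamma}$ on $[0,1]$ for a fixed $\gamma\in(0,1)$ (taking $\gamma=\alpha$ for $I_{\Omega_\alpha}$ itself, or $\gamma=1-m(1-\alpha)$ for the kernel $s^{-1+m(1-\alpha)}$ governing the $m$-th order reduction of Theorem~\ref{T:optimal-ri-domain-mazya}, which lies in $(0,1)$ in the range of parameters considered). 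The remaining hypothesis of Theorem~\ref{T:fundamental-optimal-domain-hardy}, that $t\mapsto\int_t^1 s^{-\gamma}\,\mathrm{d}s$ lie in $Y(0,1)$, is then trivial, since this function is bounded by $\tfrac{1}{1-\gamma}$ and $L^\infty(0,1)\hookrightarrow Y(0,1)$; so only \eqref{E:fundamental-optimal-domain-hardy-presup} needs to be checked.

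Next I would simplify both sides of \eqref{E:fundamental-optimal-domain-hardy-presup}. Because $\gamma<1$, $\int_{s/2}^{s}\tau^{-\gamma}\,\mathrm{d}\tau=\tfrac{1-2^{-(1-\gamma)}}{1-\gamma}\,s^{1-\gamma}\approx s^{1-\gamma}$, and quasiconcavity of $\varphi_Y$ gives $\varphi_Y(s/2)\approx\varphi_Y(s)$; hence the right-hand side of \eqref{E:fundamental-optimal-domain-hardy-presup} is $\approx t\,g(t)$, where I set $g(s):=\sup_{\tau\in(s,1)}\varphi_Y(\tau)\tau^{-\gamma}$, whereas the left-hand side equals (up to the constants hidden in $I\approx t^{\gamma}$) exactly $\int_0^t g(s)\,\mathrm{d}s$. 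The whole statement therefore collapses to the scalar inequality
\begin{equation*}
    \int_0^t g(s)\,\mathrm{d}s\ \lesssim\ t\,g(t),\qquad t\in(0,1),
\end{equation*}
the implied constant depending only on $\gamma$.

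To prove this I would exploit that $g$ is non-increasing and split dyadically, $\int_0^t g\le t\sum_{j\ge1}2^{-j}g(t2^{-j})$. Writing $\psi(\tau):=\varphi_Y(\tau)\tau^{-\gamma}$, one has $g(t2^{-j})\le\max\big(\sup_{\tau\in[t2^{-j},t]}\psi(\tau),\,g(t)\big)$, and, since $\varphi_Y$ is non-decreasing, $\sup_{\tau\in[t2^{-j},t]}\psi(\tau)\le\varphi_Y(t)\,(t2^{-j})^{-\gamma}=2^{j\gamma}\psi(t)$; thus $g(t2^{-j})\le 2^{j\gamma}\psi(t)+g(t)$. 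As $1-\gamma>0$ the series $\sum_{j\ge1}2^{-j(1-\gamma)}$ converges, and, using also $\psi(t)\le g(t)$, this yields $\int_0^t g\le\big(1+\sum_{j\ge1}2^{-j(1-\gamma)}\big)\,t\,g(t)$, which is the desired inequality. This proves \eqref{E:fundamental-optimal-domain-hardy-presup} for $I_{\Omega_\alpha}$, so Theorem~\ref{T:fundamental-optimal-domain-hardy} applies with no extra assumption; and since the proof of Corollary~\ref{T:fundamental-optimal-domain-corollary} only invokes Theorem~\ref{T:fundamental-optimal-domain-hardy} once for each of $Y_1,Y_2$, it likewise becomes applicable as soon as $\varphi_{Y_1}\approx\varphi_{Y_2}$ and \eqref{E:fundamental-optimal-domain-hardy-pres} holds for both.

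I expect the main subtlety to be the legitimacy of passing to a non-decreasing representative of $I_{\Omega_\alpha}$: the genuine profile is symmetric about $\tfrac{1}{2}$ and decays to $0$ as $t\to1^-$, so $\sup_{\tau\in(s,1)}\varphi_Y(\tau)/I_{\Omega_\alpha}(\tau)=\infty$ and \eqref{E:fundamental-optimal-domain-hardy-presup} would be vacuous if read with $I_{\Omega_\alpha}$ literally. The point is that this replacement does not alter the Sobolev embedding over $\Omega_\alpha$ (by the reduction principle, Theorem~\ref{T:reduction-principle-mazya}, which only sees $\alpha$), and that it is precisely the resulting power structure which produces the convergent geometric series $\sum_{j\ge1}2^{-j(1-\gamma)}$, $\gamma<1$, driving the estimate. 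A minor bookkeeping point is that all implied constants above depend only on $\gamma$ (hence on $m$ and $\alpha$) and on the equivalence constants in $I_{\Omega_\alpha}(t)\approx t^{\gamma}$, never on the space $Y$, which is exactly what is needed for Corollary~\ref{T:fundamental-optimal-domain-corollary}.
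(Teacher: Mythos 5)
Your proposal is correct and follows essentially the same route as the paper: after substituting the power profile and computing $\int_{s/2}^{s}\tau^{-\gamma}\,\mathrm{d}\tau\approx s^{1-\gamma}$, you reduce \eqref{E:fundamental-optimal-domain-hardy-presup} to exactly the paper's key inequality \eqref{E:fundamental-optimal-domain-hardy-mazya-wish}, i.e.\ $\int_0^t g(s)\,\mathrm{d}s\lesssim t\,g(t)$, and prove it by a dyadic decomposition whose convergence is driven by $\sum_j 2^{-j(1-\gamma)}<\infty$, which mirrors the paper's $\sigma$-adic argument based on the sub-homogeneity $\psi(t\sigma^k)\le\sigma^{(1-\alpha)k}\psi(t)$. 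Your remark about taking the non-decreasing representative $t^{\gamma}$ of $I_{\Omega_\alpha}$ (which is symmetric about $\tfrac12$) is a fair point that the paper glosses over.
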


\begin{proof}
    Let $\alpha \in [\tfrac{1}{n'},1)$. Returning to the proof of Theorem \ref{T:fundamental-optimal-domain-hardy}, in the case of $I_{\Omega_\alpha}$, the lower bound \eqref{E:fundamental-optimal-domain-hardy-LB} contains a computable integral. Thus, it holds that for some constant $c>0$
    \begin{equation*}
    \begin{split}
        \varphi_X(t) & \gtrsim t \sup_{s \in (t,1)} \frac{\varphi_Y(\tfrac{s}{2})}{s} \int_{\frac{s}{2}}^s \frac{\mathrm{d}\tau}{\tau^\alpha} = t \sup_{s \in (t,1)} \frac{\varphi_Y(\tfrac{s}{2})}{s} \, \frac{1}{1-\alpha} \, \big( s^{1-\alpha} - (\tfrac{s}{2})^{1-\alpha}\big) \\
        & = t \sup_{s \in (t,1)} \varphi_Y(\tfrac{s}{2})s^{-\alpha}c \,.
    \end{split}
    \end{equation*}
    Therefore,
    \begin{equation*}
        \varphi_X(t) \gtrsim t \sup_{s \in (t,1)} \varphi_Y(\tfrac{s}{2})s^{-\alpha}.
    \end{equation*}
    We observe that, owing to our choice of $\Omega_\alpha$ and~\eqref{E:worst-domain}, this computation cannot produce a smaller lower bound for any $\Omega \in \mathcal{J}_\alpha$. Moreover, it follows from \eqref{E:fundamental-optimal-domain-hardy-UB} that
    \begin{equation*}
        \varphi_X(t) \lesssim \int_0^t \sup_{\tau \in (s,1)} \frac{\varphi_Y(\tau)}{\tau^\alpha} \, \mathrm{d}s \,.
    \end{equation*}
    As such, to prove the corollary, it is sufficient to prove that for any $t \in (0,1)$
    \begin{equation}\label{E:fundamental-optimal-domain-hardy-mazya-wish}
        t \sup_{s \in (t,1)} \varphi_Y(\tfrac{s}{2})s^{-\alpha} \gtrsim \int_0^t \sup_{\tau \in (s,1)} \frac{\varphi_Y(\tau)}{\tau^\alpha} \, \mathrm{d}s \,.
    \end{equation}
    Define
    \begin{equation*}
        \psi(t) := t \sup_{s \in (t,1)} \varphi_Y(\tfrac{s}{2})s^{-\alpha} \quad t \in (0,1).
    \end{equation*}
    We claim that
    \begin{equation}\label{E:fundamental-optimal-domain-hardy-mazya-homo}
        \psi(t) = t \sup_{s \in (t,1)} \varphi_Y(\tfrac{s}{2})s^{-\alpha} = \sup_{s \in (0,1)} \varphi_Y(\tfrac{s}{2}) \min \big\{t^{1-\alpha},ts^{-\alpha}\big\}.
    \end{equation}
    Indeed,
    \begin{equation*}
    \begin{split}
        \psi(t) & = t \sup_{s \in (t,1)} \varphi_Y(\tfrac{s}{2})s^{-\alpha} = \max \big\{ \varphi_Y(\tfrac{t}{2}) \, t^{1-\alpha} \, ; t \sup_{s \in (t,1)} \varphi_Y(\tfrac{s}{2})s^{-\alpha} \big\} \\
        & = \max \big\{ \sup_{s \in (0,t]} \varphi_Y(\tfrac{s}{2})t^{1-\alpha} \, ; t \sup_{s \in (t,1)} \varphi_Y(\tfrac{s}{2})s^{-\alpha} \big\} \\
        & = \max \big\{ \sup_{s \in (0,t]} \varphi_Y(\tfrac{s}{2}) \min \big\{t^{1-\alpha},ts^{-\alpha}\big\}; \sup_{s \in (t,1)} \varphi_Y(\tfrac{s}{2}) \min \big\{t^{1-\alpha},ts^{-\alpha}\big\} \big\} \\
        & = \sup_{s \in (0,1)} \varphi_Y(\tfrac{s}{2}) \min \big\{t^{1-\alpha},ts^{-\alpha}\big\}.
    \end{split}
    \end{equation*}
    Therefore, $\psi$ is non-decreasing. It also follows from equality \eqref{E:fundamental-optimal-domain-hardy-mazya-homo} that for any $\sigma \in (0,1)$ and any $k \in \mathbb{N}$, one has $ \psi(t\sigma^k) \leq \sigma^{(1-\alpha) k} \psi(t)$ for $t \in (0,1)$ and therefore
    \begin{equation*}
    \begin{split}
        \int_0^t \frac{\psi(s)}{s} \, \mathrm{d}s & = \sum_{k=0}^\infty \int_{t\sigma^{k+1}}^{t\sigma^k} \frac{\psi(s)}{s} \, \mathrm{d}s \leq \sum_{k=0}^\infty \psi (t\sigma^k) \int_{t\sigma^{k+1}}^{t\sigma^k} \frac{\mathrm{d}s}{s} \\
        & \leq \psi(t) \log \frac{1}{\sigma} \sum_{k=0}^\infty \sigma^{(1-\alpha) k} = \psi (t) \frac{\log \tfrac{1}{\sigma}}{1-\sigma^{(1-\alpha)}}.
    \end{split}
    \end{equation*}
    Hence,
    \begin{equation*}
        \psi(t) \gtrsim \int_0^t \frac{\psi(s)}{s} \mathrm{d}s,
    \end{equation*}
    and by definition of $\psi$, we obtain the desired result \eqref{E:fundamental-optimal-domain-hardy-mazya-wish}.
\end{proof}

Combining all the results of this chapter so far yields the following theorem, which is an extension of \cite[Theorem 4.9]{MuPiTa22}.

\begin{theorem}[nonexistence of an optimal Orlicz space] \label{T:nonexistence-optimal-orlicz-on-level}
    Let $m,n \in \mathbb{N}$, let $\alpha \in [\tfrac{1}{n'},1)$, and let $Y(\Omega)$ be an r.i.~space. Assume there is no largest Orlicz space $L^A$ such that
    \begin{equation}\label{E:orlicz-to-marcinkiewicz}
        W^mL^A(\Omega) \hookrightarrow M(Y)(\Omega)
    \end{equation}
    for every $\Omega$ in $\mathcal{J_\alpha}$. Then, there is no largest Orlicz space $L^A$ such that
    \begin{equation}\label{E:orlicz-to-y}
        W^mL^A(\Omega) \hookrightarrow Y(\Omega)
    \end{equation}
    for every $\Omega$ in $\mathcal{J_\alpha}$.
\end{theorem}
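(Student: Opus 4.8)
The plan is to push the nonexistence statement through the principal alternative for Sobolev embeddings (Theorem~\ref{T:principal-alternative-sobolev-embeddings}), exploiting that $M(Y)$ and $Y$ sit on the same fundamental level and that this property is inherited by their optimal r.i.\ domain partners. Denote by $X_Y$ the largest r.i.\ space with $W^mX_Y(\Omega)\hookrightarrow Y(\Omega)$ for every $\Omega\in\mathcal{J}_\alpha$, and by $X_M$ the largest r.i.\ space with $W^mX_M(\Omega)\hookrightarrow M(Y)(\Omega)$ for every $\Omega\in\mathcal{J}_\alpha$; both exist by Theorem~\ref{T:optimal-ri-domain-mazya} and the remark following it. First I would dispatch the degenerate case $\varphi_Y(0^+)>0$: then $\|f\|_{M(Y)}=\sup_{t}\varphi_Y(t)f^{**}(t)\approx\|f\|_{L^\infty}$ (because $\varphi_Y$ is bounded and bounded away from $0$ near the origin, while $f^{**}(0^+)=\|f\|_{L^\infty}$), so $M(Y)=L^\infty$; combined with $L^\infty\hookrightarrow Y\hookrightarrow M(Y)$ from \eqref{E:Lorentz-Marcinkiewicz-Sandwich} this forces $Y=M(Y)$, making the assertion trivial. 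Henceforth assume $\varphi_Y(0^+)=0$.

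For the core argument, note first that $Y\hookrightarrow M(Y)$ implies $X_Y$ also renders $W^mX_Y(\Omega)\hookrightarrow M(Y)(\Omega)$ true for every $\Omega\in\mathcal{J}_\alpha$, so by maximality of $X_M$ we get $X_Y\hookrightarrow X_M$, hence $X_Y\subset X_M$. Next, $\varphi_{M(Y)}\approx\varphi_Y$ by construction of the Marcinkiewicz space, so $Y$ and $M(Y)$ lie on one fundamental level; applying Corollary~\ref{T:fundamental-optimal-domain-corollary} to the pair $Y$, $M(Y)$ over the class $\mathcal{J}_\alpha$ — which is legitimate since Corollary~\ref{C:no-need-for-condition} removes the restriction \eqref{E:fundamental-optimal-domain-hardy-presup} by reducing to the worst domain $\Omega_\alpha$, the remaining hypothesis \eqref{E:fundamental-optimal-domain-hardy-pres} is exactly $\varphi_Y(0^+)=0$, and the integrability requirement holds trivially because $t\mapsto\int_t^1 I_{\Omega_\alpha}(s)^{-1}\,\mathrm{d}s$ is bounded — yields $\varphi_{X_Y}\approx\varphi_{X_M}$. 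By \eqref{Orlicz-fundamental_formula} the fundamental Orlicz space is determined by the fundamental function, so $L(X_Y)=L(X_M)=:L^{A^*}$.

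Now run the principal alternative (Theorem~\ref{T:principal-alternative-sobolev-embeddings}\,(\ref{en:PA-sobolev-embeddings-domain})) twice. Applied with domain space $X_M$ and target $M(Y)$: if $L(X_M)\subset X_M$ held, the theorem would produce a largest Orlicz space for the embedding into $M(Y)$, contradicting the hypothesis; hence $L^{A^*}=L(X_M)\not\subset X_M$. Since $X_Y\subset X_M$, this forces $L^{A^*}\not\subset X_Y$ as well — otherwise $L^{A^*}\subset X_Y\subset X_M$. Applied now with domain space $X_Y$ and target $Y$: because $L(X_Y)=L^{A^*}\not\subset X_Y$, we are in the second alternative, i.e.\ there is no largest Orlicz space $L^A$ with $W^mL^A(\Omega)\hookrightarrow Y(\Omega)$ for every $\Omega\in\mathcal{J}_\alpha$, which is the claim.

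The main obstacle is the middle step, $\varphi_{X_Y}\approx\varphi_{X_M}$: one has to check the hypotheses of Theorem~\ref{T:fundamental-optimal-domain-hardy}/Corollary~\ref{T:fundamental-optimal-domain-corollary} for both targets and, crucially, use Corollary~\ref{C:no-need-for-condition} to sidestep the restrictive presupposition \eqref{E:fundamental-optimal-domain-hardy-presup}, which in turn rests on the reduction principle (Theorem~\ref{T:reduction-principle-mazya}) and the worst-domain construction (Proposition~\ref{T: worst-domain}) letting us replace ``for every $\Omega\in\mathcal{J}_\alpha$'' by the single explicit domain $\Omega_\alpha$. Everything after that is formal bookkeeping with set inclusions and the principal alternative.
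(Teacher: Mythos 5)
Your proof is correct and follows essentially the same route as the paper's: establish $\varphi_{X_Y}\approx\varphi_{X_{M(Y)}}$ via Corollary~\ref{T:fundamental-optimal-domain-corollary} and Corollary~\ref{C:no-need-for-condition}, use $X_Y\subset X_{M(Y)}$ from the maximality of the optimal domain, and apply the principal alternative (Theorem~\ref{T:principal-alternative-sobolev-embeddings}) twice. Your explicit treatment of the degenerate case $\varphi_Y(0^+)>0$ and the verification of the hypotheses of Corollary~\ref{T:fundamental-optimal-domain-corollary} are a welcome extra degree of care that the paper's proof leaves implicit.
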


\begin{proof}
    We know that $Y(\Omega)$ and $M(Y)(\Omega)$ share the same fundamental function. We denote by $X_Y(\Omega)$ and $X_{M(Y)}(\Omega)$ the largest domain r.i.~spaces in embeddings
    \begin{equation*}
        W^mX_{Y}(\Omega) \hookrightarrow Y(\Omega)
    \end{equation*}
    and
    \begin{equation*}
        W^mX_{M(Y)}(\Omega) \hookrightarrow M(Y)(\Omega),
    \end{equation*}
    respectively. Then, by Corollary~\ref{T:fundamental-optimal-domain-corollary} combined with Corollary~\ref{C:no-need-for-condition}, we obtain that $\varphi_{X_Y}=\varphi_{X_{M(Y)}}$. Owing to the assumption, there is no largest Orlicz space $L^A(\Omega)$ in the Sobolev 
    embedding~\eqref{E:orlicz-to-marcinkiewicz}. Therefore, by Theorem~\ref{T:principal-alternative-sobolev-embeddings}, it follows that $L_{X_{M(Y)}}(\Omega)\not\hookrightarrow X_{M(Y)}(\Omega)$. Since it holds that $Y(\Omega)\hookrightarrow 
    M(Y)(\Omega)$, it is also true that $X_Y(\Omega)\hookrightarrow X_{M(Y)}(\Omega)$. Consequently, $L_{X_{M(Y)}}(\Omega)\not\hookrightarrow X_Y(\Omega)$. But, since $\varphi_{X_Y}=\varphi_{X_{M(Y)}}$, we have 
    $L_{X_{M(Y)}}(\Omega)=L_{X_Y}(\Omega)$. Altogether, $L_{X_{Y}}(\Omega)\not\hookrightarrow X_Y(\Omega)$, whence, using Theorem~\ref{T:principal-alternative-sobolev-embeddings} once again, there is no largest Orlicz space 
    $L^A(\Omega)$ in the Sobolev embedding~\eqref{E:orlicz-to-y}.
\end{proof}

We shall now apply Theorem \ref{T:nonexistence-optimal-orlicz-on-level} to show that there is no optimal Orlicz domain space $L^A(\Omega)$ in the embedding
\begin{equation*}
    W^mL^A(\Omega) \hookrightarrow L^{\infty,q,-1+(1-\alpha)m-\frac{1}{q}}(\Omega)\quad\text{for $q \in [\tfrac{1}{1-m(1-\alpha)}, \infty]$,}
\end{equation*}
thereby solving the open problem mentioned in the introduction. The value $q=\tfrac{1}{1-m(1-\alpha)}$ corresponds to the case in which the target space is optimal in the embedding of the limiting Sobolev space, see~\eqref{E:general-bw}.

First, we shall prove two lemmas, which we will later use in the proof. The first lemma concerns the r.i.~norm of the characteristic function.

\begin{lemma} \label{T: ri-norm-characteristic-function}
    Let $\zeta > 0$, $a \in (0,1)$ and $\| \cdot \|_{X(0,1)}$ be an r.i.~norm. Then
    \begin{equation*}
        \| \chi_{(0,a)}(t)(a^\zeta - t^\zeta) \|_{X(0,1)} \approx a^\zeta \| \chi_{(0,a)}\|_{X(0,1)}.
    \end{equation*}
\end{lemma}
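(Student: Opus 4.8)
The claim is that $\| \chi_{(0,a)}(t)(a^\zeta - t^\zeta) \|_{X(0,1)} \approx a^\zeta \| \chi_{(0,a)}\|_{X(0,1)}$, with constants independent of $a$ and (presumably) of $\zeta$ on bounded ranges. The upper bound is immediate: on $(0,a)$ one has $0 \le a^\zeta - t^\zeta \le a^\zeta$, so by the lattice property \ref{(P2)} and homogeneity \ref{(P1)},
\[
    \| \chi_{(0,a)}(t)(a^\zeta - t^\zeta) \|_{X(0,1)} \le a^\zeta \|\chi_{(0,a)}\|_{X(0,1)}.
\]
So the whole content is the reverse inequality $a^\zeta \|\chi_{(0,a)}\|_{X(0,1)} \lesssim \| \chi_{(0,a)}(t)(a^\zeta - t^\zeta) \|_{X(0,1)}$.

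For the lower bound, the idea is to observe that $a^\zeta - t^\zeta$ is bounded below by a fixed fraction of $a^\zeta$ on a subinterval of $(0,a)$ of proportional length, and then exploit the boundedness of the dilation operator. Concretely, pick $\theta \in (0,1)$ with $\theta^\zeta = \tfrac12$, i.e. $\theta = 2^{-1/\zeta}$; then for $t \in (0,\theta a)$ we have $t^\zeta < \tfrac12 a^\zeta$, hence $a^\zeta - t^\zeta > \tfrac12 a^\zeta$. Therefore
\[
    \chi_{(0,a)}(t)(a^\zeta - t^\zeta) \ge \tfrac12 a^\zeta \chi_{(0,\theta a)}(t),
\]
and by \ref{(P2)}, \ref{(P1)},
\[
    \| \chi_{(0,a)}(t)(a^\zeta - t^\zeta) \|_{X(0,1)} \ge \tfrac12 a^\zeta \|\chi_{(0,\theta a)}\|_{X(0,1)} = \tfrac12 a^\zeta \varphi_X(\theta a).
\]
It remains to compare $\varphi_X(\theta a)$ with $\varphi_X(a) = \|\chi_{(0,a)}\|_{X(0,1)}$. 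Since $\chi_{(0,a)} = E_{1/\theta}\chi_{(0,\theta a)}$ (the dilation that stretches $(0,\theta a)$ onto $(0,a)$), and the dilation operator $E_{1/\theta}$ is bounded on $X(0,1)$ with norm at most $\max\{1,\theta\} = 1$... wait, one needs the dilation the other way: $\chi_{(0,\theta a)}(t) = \chi_{(0,a)}(t/\theta) = (E_\theta \chi_{(0,a)})(t)$ is not quite it either. The clean statement is $\varphi_X(\lambda t) \le \max\{1,\lambda\}\,\varphi_X(t)$, which follows from applying $E_\lambda$ to $\chi_{(0,t)}$: indeed $E_\lambda \chi_{(0,t)} = \chi_{(0,\lambda t)}$ for $\lambda t \le 1$, and $\|E_\lambda\| \le \max\{1,1/\lambda\}$, giving $\varphi_X(\lambda t) \le \max\{1,1/\lambda\}\varphi_X(t)$. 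Using this with $\lambda = 1/\theta > 1$: $\varphi_X(a) = \varphi_X((1/\theta)\cdot \theta a) \le \max\{1,\theta\}\varphi_X(\theta a) = \varphi_X(\theta a)$ — hmm that gives $\varphi_X(a) \le \varphi_X(\theta a)$ which is false since $\varphi_X$ is nondecreasing and $\theta a < a$. I have the dilation norm backwards; let me recompute: $\|E_\lambda\|_{X\to X} \le \max\{1, 1/\lambda\}$ as stated in the excerpt. Apply $E_{1/\theta}$ (so $\lambda = 1/\theta > 1$, $1/\lambda = \theta < 1$) to $g = \chi_{(0,\theta a)}$: $(E_{1/\theta} g)(t) = g(\theta t) = \chi_{(0,\theta a)}(\theta t) = \chi_{(0,a)}(t)$ for $t\le 1$. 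So $\varphi_X(a) = \|\chi_{(0,a)}\| = \|E_{1/\theta}\chi_{(0,\theta a)}\| \le \|E_{1/\theta}\|\cdot\|\chi_{(0,\theta a)}\| \le \max\{1,\theta\}\varphi_X(\theta a) = \varphi_X(\theta a)$ — still the same wrong-direction conclusion, because $\max\{1,\theta\}=1$. That inequality $\varphi_X(a)\le\varphi_X(\theta a)$ combined with monotonicity $\varphi_X(\theta a)\le\varphi_X(a)$ forces equality, which is absurd. The resolution: $(E_\lambda g)(t) = g(t/\lambda)$, not $g(\lambda t)$. So $(E_{1/\theta}g)(t) = g(\theta t)$ — no wait, $1/\lambda$ with $\lambda = 1/\theta$ is $\theta$, and $(E_{1/\theta}g)(t) = g(t/(1/\theta)) = g(\theta t)$. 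Hmm. Let me instead use $E_\theta$ directly: $(E_\theta g)(t) = g(t/\theta)$ for $0<t\le\theta$, $0$ else. With $g = \chi_{(0,a)}$: $(E_\theta\chi_{(0,a)})(t) = \chi_{(0,a)}(t/\theta) = 1$ iff $t/\theta < a$ and $t\le\theta$, i.e. $t < \theta a$ (assuming $\theta a \le \theta$, true since $a<1$). So $E_\theta \chi_{(0,a)} = \chi_{(0,\theta a)}$, and $\|E_\theta\| \le \max\{1,1/\theta\} = 1/\theta$. This gives $\varphi_X(\theta a) \le (1/\theta)\varphi_X(a)$ — correct direction but not what I need. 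I need $\varphi_X(a) \lesssim \varphi_X(\theta a)$. Use quasiconcavity of $\varphi_X$: since $t\mapsto t/\varphi_X(t)$ is nondecreasing, $\theta a / \varphi_X(\theta a) \le a/\varphi_X(a)$, so $\varphi_X(a) \le (1/\theta)\varphi_X(\theta a)$. There we go — that is exactly the needed comparison, and it is free from quasiconcavity (stated in the excerpt, "for any r.i.~space, its fundamental function is quasiconcave").

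So the final chain is: $\| \chi_{(0,a)}(t)(a^\zeta - t^\zeta) \|_{X(0,1)} \ge \tfrac12 a^\zeta\varphi_X(\theta a) \ge \tfrac{\theta}{2} a^\zeta \varphi_X(a) = \tfrac{\theta}{2}a^\zeta\|\chi_{(0,a)}\|_{X(0,1)}$, with $\theta = 2^{-1/\zeta}$. Together with the trivial upper bound this proves $\| \chi_{(0,a)}(t)(a^\zeta - t^\zeta) \|_{X(0,1)} \approx a^\zeta \|\chi_{(0,a)}\|_{X(0,1)}$. I do not foresee a genuine obstacle here — the only subtle point is getting the direction of the quasiconcavity/dilation estimate right (the constant $\theta/2$ depends on $\zeta$, which is harmless since $\zeta$ is a fixed parameter in the statement). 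If a $\zeta$-uniform constant is wanted over some range $\zeta \in [\zeta_0,\infty)$, one simply notes $\theta = 2^{-1/\zeta} \ge 2^{-1/\zeta_0}$ there; near $\zeta\to 0$ the constant degenerates, but that regime is excluded by $\zeta>0$ being fixed.
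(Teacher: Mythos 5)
Your argument is correct and follows essentially the same strategy as the paper's: the upper bound is the trivial pointwise estimate $a^\zeta-t^\zeta\le a^\zeta$, and the lower bound comes from restricting to a left subinterval of $(0,a)$ of length proportional to $a$ on which the weight exceeds a fixed fraction of $a^\zeta$, and then comparing fundamental-function values at the two scales. The only (minor) difference is that last comparison: the paper fixes the subinterval $(0,\tfrac a2)$ and shows $\|\chi_{(0,a)}\|_{X}\le 2\|\chi_{(0,\frac a2)}\|_{X}$ by splitting $\chi_{(0,a)}$ into two equimeasurable pieces and using the triangle inequality, whereas you fix the retained fraction (taking $\theta=2^{-1/\zeta}$) and invoke quasiconcavity of $\varphi_X$ to get $\varphi_X(a)\le\theta^{-1}\varphi_X(\theta a)$ --- an equivalent standard fact, so your conclusion stands and the inconclusive detour through dilation operators in your draft can simply be deleted.
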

\begin{proof}
    The inequality
    \begin{equation*}
        \| \chi_{(0,a)}(t)(a^\zeta - t^\zeta) \|_{X(0,1)} \lesssim a^\zeta \| \chi_{(0,a)} \|_{X(0,1)}   
    \end{equation*}
    is obvious. Conversely, we observe that
    \begin{equation*}
    \begin{split}
        \| \chi_{(0,a)}(t)(a^\zeta - t^\zeta) \|_{X(0,1)} & \geq \| \chi_{(0,\frac{a}{2})}(t)(a^\zeta - t^\zeta) \|_{X(0,1)} \\
        & \geq \| \chi_{(0,\frac{a}{2})}(t)(a^\zeta - (\tfrac{a}{2})^\zeta) \|_{X(0,1)} \\
        & = a^\zeta (1-(\tfrac{1}{2})^\zeta) \| \chi_{(0,\frac{a}{2})}  \|_{X(0,1)} \\
        & \gtrsim a^\zeta \| \chi_{(0,a)}\|_{X(0,1)},
    \end{split}
    \end{equation*}
    since, by the triangle inequality and by the rearrangement-invariance of $\| \cdot \|_{X(0,1)}$,
    \begin{equation*}
        \| \chi_{(0,a)} \|_{X(0,1)} = \| \chi_{(0,\frac{a}{2}]} + \chi_{(\frac{a}{2},a)}\|_{X(0,1)} \leq \| \chi_{(0,\frac{a}{2})} \|_{X(0,1)} + \| \chi_{(\frac{a}{2},a)} \|_{X(0,1)}
    \end{equation*}
    \begin{equation*}
        \, \, \leq 2 \| \chi_{(0,\frac{a}{2})} \|_{X(0,1)}. \qquad \qquad \qquad \qquad \qquad \quad
    \end{equation*}
    Altogether, one has
    \begin{equation*}
        \| \chi_{(0,a)}(t)(a^\zeta - t^\zeta) \|_{X(0,1)} \approx a^\zeta \| \chi_{(0,a)}\|_{X(0,1)},
    \end{equation*}
    with the constants of equivalence depending only on $\zeta$.
\end{proof}

The second lemma concerns the boundedness of the operator $T_\gamma$ introduced in~\eqref{E:sup-operator}.

\begin{lemma} \label{T: supremum-operator}
    Let $\gamma \in (0,1)$. Then
    \begin{equation}\label{E:t-gamma}
        T_\gamma\colon L^{1,1;1-\gamma}(0,1) \rightarrow L^{1,1;1-\gamma}(0,1).
    \end{equation}
\end{lemma}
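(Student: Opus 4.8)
The plan is to strip $T_\gamma$ down to an elementary discrete inequality. Two preliminary reductions carry most of the load. Since $t\mapsto t^{-\gamma}$ is decreasing and $t\mapsto\sup_{s\in[t,1]}s^\gamma g^*(s)$ is non-increasing, the function $T_\gamma g$ is non-increasing on $(0,1)$, hence $(T_\gamma g)^*=T_\gamma g$; and because $p=q=1$ and $1-\gamma\geq 0$, the relevant norm is, up to equivalence,
\begin{equation*}
    \|f\|_{L^{1,1;1-\gamma}(0,1)}=\int_0^1\log^{1-\gamma}\big(\tfrac{2}{t}\big)f^*(t)\,\mathrm{d}t .
\end{equation*}
Writing $\Phi(t):=\sup_{s\in[t,1]}s^\gamma g^*(s)$ and noting that $T_\gamma$ and the norm depend on $g$ only through $g^*$, the boundedness~\eqref{E:t-gamma} is equivalent to
\begin{equation*}
    \int_0^1\log^{1-\gamma}\big(\tfrac{2}{t}\big)\,t^{-\gamma}\,\Phi(t)\,\mathrm{d}t\lesssim\int_0^1\log^{1-\gamma}\big(\tfrac{2}{t}\big)\,g^*(t)\,\mathrm{d}t
\end{equation*}
for every non-increasing $g^*\geq 0$.

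Next I would discretize over the dyadic intervals $I_j=[2^{-j-1},2^{-j})$, $j\geq 0$, setting $a_j:=g^*(2^{-j-1})$ and $c_j:=2^{-j\gamma}a_j$. On $I_j$ one has $\log\tfrac{2}{t}\approx j+1$ and $\int_{I_j}t^{-\gamma}\,\mathrm{d}t\approx 2^{-j(1-\gamma)}$; covering $[2^{-j-1},1]=\bigcup_{k=0}^{j}[2^{-k-1},2^{-k}]$ and using $s^\gamma\leq 2^{-k\gamma}$ together with $g^*(s)\leq a_k$ on the $k$-th piece gives $\Phi(t)\leq\max_{0\leq k\leq j}c_k$ for $t\in I_j$. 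Hence $\|T_\gamma g\|_{L^{1,1;1-\gamma}}\lesssim\sum_{j\geq 0}w_j\max_{0\leq k\leq j}c_k$ with $w_j:=(j+1)^{1-\gamma}2^{-j(1-\gamma)}$. Bounding $g^*$ from below on $I_j$ by $g^*(2^{-j})=a_{j-1}$ gives, after an index shift and using $w_jc_j=(j+1)^{1-\gamma}2^{-j}a_j$, the estimate $\|g\|_{L^{1,1;1-\gamma}}\gtrsim\sum_{j\geq 0}w_jc_j$. The decisive point is that once $a_j$ is written as $2^{j\gamma}c_j$ the \emph{same} weight $w_j$ governs both sides, so everything reduces to
\begin{equation*}
    \sum_{j\geq 0}w_j\max_{0\leq k\leq j}c_k\lesssim\sum_{k\geq 0}w_kc_k\qquad\text{for all }c_k\geq 0 .
\end{equation*}

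This is a Hardy inequality for the running maximum, and it holds because $w_k$ decays geometrically up to a polynomial factor. With $m_j:=\max_{0\leq k\leq j}c_k$ (and $m_{-1}:=0$), telescoping $m_j=\sum_{k=0}^{j}(m_k-m_{k-1})$ into non-negative increments and interchanging the order of summation gives $\sum_j w_jm_j=\sum_k(m_k-m_{k-1})\sum_{j\geq k}w_j$. The tail estimate $\sum_{j\geq k}w_j\lesssim w_k$ follows from the geometric factor $2^{-j(1-\gamma)}$ and the subadditivity $(k+1+i)^{1-\gamma}\leq(k+1)^{1-\gamma}+i^{1-\gamma}$, which reduces the tail to two convergent series; and $m_k-m_{k-1}\leq c_k$ always, since an increase of the running maximum at step $k$ forces $m_k=c_k$. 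Combining the two facts yields $\sum_j w_jm_j\lesssim\sum_k c_kw_k$, as required.

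I expect the main difficulty to be purely bookkeeping: keeping all equivalence constants independent of $g$ through the dyadic splitting, and handling the behaviour near $t=1$, where $\log\tfrac{2}{t}$ is bounded and the asymptotics $\int_0^b t^{-\gamma}\log^{1-\gamma}(\tfrac2t)\,\mathrm{d}t\approx b^{1-\gamma}\log^{1-\gamma}(\tfrac2b)$ degenerate to plain two-sided constant bounds. The only conceptual step is the observation that the problem collapses to a running-maximum Hardy inequality with coinciding weights. (An alternative route, which I would note but not pursue, is to dominate $T_\gamma$ by the Copson operator $g\mapsto t^{-\gamma}\int_t^1 s^{\gamma-1}g^*(s)\,\mathrm{d}s$ plus the bounded term $g^*$, then pass to the associate space $(L^{1,1;1-\gamma})'=L^{\infty,\infty;-(1-\gamma)}$ and verify boundedness of the resulting weighted Hardy operator there using the same integral asymptotics; the direct discrete argument seems cleaner.)
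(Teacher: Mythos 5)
Your argument is correct, but it is a genuinely different route from the paper's. The paper proves the lemma in one line by invoking the general characterization of boundedness of weighted supremum operators from Gogatishvili--Opic--Pick \cite[Theorem 3.2]{GoOpPi06} with $p=q=1$, $u(t)=t^\gamma$, $v(t)=w(t)=(\log\tfrac2t)^{1-\gamma}$, and then verifying the resulting integral condition, which reduces (since $\sup_{s\le\tau\le t}\tau^\gamma=t^\gamma$) to exactly the asymptotics $\int_0^t s^{-\gamma}\log^{1-\gamma}(\tfrac2s)\,\mathrm{d}s\approx t^{1-\gamma}\log^{1-\gamma}(\tfrac2t)$ that you mention only in your closing parenthetical. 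Your proof is instead self-contained: you use monotonicity of $T_\gamma g$ to identify $(T_\gamma g)^*$ with $T_\gamma g$, discretize dyadically, and reduce everything to the running-maximum Hardy inequality $\sum_j w_j\max_{k\le j}c_k\lesssim\sum_k w_kc_k$ with $w_j=(j+1)^{1-\gamma}2^{-j(1-\gamma)}$, which you settle by telescoping and the tail bound $\sum_{j\ge k}w_j\lesssim w_k$. All the individual steps check out: the upper bound $\Phi\le\max_{k\le j}c_k$ on $I_j$, the lower bound for $\|g\|$ via $g^*(2^{-j})=a_{j-1}$ and an index shift, the coincidence of the weights after substituting $a_j=2^{j\gamma}c_j$, and the subadditivity estimate for the tail (which needs $0<1-\gamma\le1$, satisfied here). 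What the citation buys the paper is brevity and a ready-made necessary-and-sufficient condition; what your argument buys is independence from the external theorem at the cost of a page of bookkeeping. Either is acceptable; if you keep yours, state explicitly that $T_\gamma g$ is non-increasing (so no rearrangement is needed on the left-hand side) and that the interchange of summation is justified by non-negativity.
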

\begin{proof}
    This is a particular case of \cite[Theorem 3.2]{GoOpPi06}, applied to $p=q=1$, and $u(t) = t^\gamma$, $v(t) = w(t) = (\log\tfrac{2}{t})^{1-\gamma}$ for $t \in (0,1)$. It is readily verified that the necessary and sufficient condition for~\eqref{E:t-gamma}, namely 
    \begin{equation*}
        \int_{0}^t s^{-\gamma}\sup_{s\le \tau\le t}u(\tau) w(s)\,ds
        \le C
        \int_{0}^{t}v(s)\,ds
        \quad\text{for every $t\in(0,1)$,}
    \end{equation*}
    holds.
\end{proof}

We are now in position to formulate and prove our main result.

\begin{theorem} \label{T:nonexistence-optimal-orlicz-sobolev}
    Let $n \in \mathbb{N}$, $n \geq 2$, $\alpha \in [\tfrac{1}{n'},1)$, $m \in \mathbb{N}$ such that $m < \tfrac{1}{1-\alpha}$, and let $q \in [\tfrac{1}{m(1-\alpha)}, \infty]$. Then there is no largest Orlicz space $L^A$ such that
    \begin{equation}\label{E:embedding-BW}
        W^mL^A(\Omega) \hookrightarrow L^{\infty,q,-1+(1-\alpha)m-\frac{1}{q}}(\Omega)
    \end{equation}
    for every $\Omega$ in $\mathcal{J}_\alpha$.
\end{theorem}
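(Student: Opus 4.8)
The plan is to reduce everything, via Theorem~\ref{T:nonexistence-optimal-orlicz-on-level}, to the Marcinkiewicz endpoint and then apply the principal alternative for Sobolev embeddings. Write $\delta:=1-m(1-\alpha)$ and $p_0:=\tfrac1{m(1-\alpha)}$; the hypotheses $m<\tfrac1{1-\alpha}$ and $q\in[p_0,\infty]$ give $\delta\in(0,1)$, $p_0\in(1,\infty)$, $\tfrac1{p_0'}=\delta$, and they put the exponent $-1+(1-\alpha)m-\tfrac1q$ of the target into the regime of condition~\ref{(Z3)} (if $q=\infty$) or~\ref{(Z4)} (if $q<\infty$), so that $Y=L^{\infty,q;-1+(1-\alpha)m-\frac1q}(\Omega)$ is an r.i.~space with $\varphi_Y(t)\approx\bigl(\log\tfrac2t\bigr)^{-\delta}$ on $(0,1)$. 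Consequently $M(Y)$ is the largest r.i.~space on this fundamental level, with $\|g\|_{M(Y)}\approx\sup_{t\in(0,1)}\bigl(\log\tfrac2t\bigr)^{-\delta}g^{**}(t)$ and $\varphi_{M(Y)}(0^+)=0$. By Theorem~\ref{T:nonexistence-optimal-orlicz-on-level} it then suffices to show that there is \emph{no} largest Orlicz space $L^A$ rendering $W^mL^A(\Omega)\hookrightarrow M(Y)(\Omega)$ true for every $\Omega\in\mathcal J_\alpha$.

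I would next compute the fundamental function of the optimal r.i.~domain space $X$ of the embedding $W^mX(\Omega)\hookrightarrow M(Y)(\Omega)$, $\Omega\in\mathcal J_\alpha$. By Theorem~\ref{T:optimal-ri-domain-mazya} (with hypothesis~\eqref{E:KPembedding} dropped, as noted there) this space has norm $\|f\|_{X(0,1)}=\sup_{h\sim f}\bigl\Vert\int_t^1 h(s)s^{-1+m(1-\alpha)}\,\mathrm{d}s\bigr\Vert_{M(Y)(0,1)}$, i.e.~it is the operator-induced space of Theorem~\ref{T:fundamental-optimal-domain-hardy} with $I(s)=s^{\delta}$; here $\int_t^1 s^{-\delta}\,\mathrm{d}s$ is bounded, hence in $M(Y)$, and~\eqref{E:fundamental-optimal-domain-hardy-pres} holds, while presupposition~\eqref{E:fundamental-optimal-domain-hardy-presup} may be discarded because the argument of Corollary~\ref{C:no-need-for-condition} goes through for the kernel $s^{-1+m(1-\alpha)}=s^{-\delta}$ with $\alpha$ there replaced by $\delta\in(0,1)$. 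Since $\int_{s/2}^{s}\tau^{-\delta}\,\mathrm{d}\tau\approx s^{1-\delta}$ and $s\mapsto\bigl(\log\tfrac2s\bigr)^{-\delta}s^{-\delta}$ is decreasing near $0$, Theorem~\ref{T:fundamental-optimal-domain-hardy} yields
\begin{equation*}
    \varphi_X(t)\;\approx\;t\sup_{s\in(t,1)}\frac{\varphi_{M(Y)}(\tfrac s2)}{s}\int_{\frac s2}^{s}\frac{\mathrm{d}\tau}{\tau^{\delta}}\;\approx\;t^{1-\delta}\bigl(\log\tfrac2t\bigr)^{-\delta}\;=\;t^{1/p_0}\bigl(\log\tfrac2t\bigr)^{-1/p_0'},\qquad t\in(0,1).
\end{equation*}
By~\eqref{Orlicz-fundamental_formula} the fundamental Orlicz space $L(X)$ is therefore the Orlicz space whose Young function is equivalent near infinity to $t^{p_0}\log^{-(p_0-1)}t$, that is, $L(X)=L^{p_0}\log^{-(p_0-1)}L=L^{p_0,p_0;-1/p_0'}$.

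Now the principal alternative for Sobolev embeddings (Theorem~\ref{T:principal-alternative-sobolev-embeddings}) says that a largest Orlicz space in $W^mL^A(\Omega)\hookrightarrow M(Y)(\Omega)$ exists precisely when $L(X)(0,1)\subset X(0,1)$, so it remains to exhibit $f\in L(X)(0,1)\setminus X(0,1)$. Fix $\gamma\in\bigl(\tfrac2{p_0}-1,\tfrac1{p_0}\bigr)$ --- a nonempty interval since $p_0>1$ --- and let $f$ satisfy $f^*(t)=t^{-1/p_0}\bigl(\log\tfrac2t\bigr)^{-\gamma}$. The Young function of $L(X)$ satisfies the $\Delta_2$ condition, so $f\in L(X)$ is equivalent to $\int_0^1 A(f^*)<\infty$, which after substituting $u=\log\tfrac2t$ reads $\int^{\infty}u^{-\gamma p_0-(p_0-1)}\,\mathrm{d}u<\infty$, true because $\gamma>\tfrac2{p_0}-1$. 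Conversely, choosing the admissible rearrangement $h=f^*$ in $\|\cdot\|_{X(0,1)}$ and using $-\tfrac1{p_0}-\delta=-1$ one gets $\int_\tau^1 f^*(s)s^{-\delta}\,\mathrm{d}s=\int_\tau^1 s^{-1}\bigl(\log\tfrac2s\bigr)^{-\gamma}\,\mathrm{d}s\approx\bigl(\log\tfrac2\tau\bigr)^{1-\gamma}$ as $\tau\to0^+$ (here $\gamma<\tfrac1{p_0}<1$); hence its elementary maximal function is $\approx\bigl(\log\tfrac2t\bigr)^{1-\gamma}$ and
\begin{equation*}
    \|f\|_{X(0,1)}\;\ge\;\Bigl\Vert\int_{\cdot}^{1}f^*(s)s^{-\delta}\,\mathrm{d}s\Bigr\Vert_{M(Y)(0,1)}\;\approx\;\sup_{t\in(0,1)}\bigl(\log\tfrac2t\bigr)^{1-\gamma-\delta}\;=\;\infty,
\end{equation*}
since $1-\gamma-\delta=\tfrac1{p_0}-\gamma>0$. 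Thus $L(X)\not\subset X$, and the non-existence follows from Theorem~\ref{T:principal-alternative-sobolev-embeddings} together with Theorem~\ref{T:nonexistence-optimal-orlicz-on-level}.

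As for tools and difficulty: the routine norm bookkeeping --- norms of the power--logarithmic test function, and of $\chi_{(0,a)}(t)\int_t^a s^{-\delta}\,\mathrm{d}s\approx(a^{1-\delta}-t^{1-\delta})\chi_{(0,a)}(t)$ --- is handled by Lemma~\ref{T: ri-norm-characteristic-function}, while the reduction of the supremum over equimeasurable rearrangements in $\|\cdot\|_X$ to a single expression rests on the associate space of $M(Y)$ being equivalent to $L^{1,1;\delta}$, on which the supremum operator $T_{1-\delta}=T_{1/p_0}$ is bounded by Lemma~\ref{T: supremum-operator}. The genuinely delicate point is the final step: the logarithmic exponent $\gamma$ must be placed in the precise half-open window $\bigl(\tfrac2{p_0}-1,\tfrac1{p_0}\bigr)$ so that $f$ lies simultaneously inside the candidate fundamental Orlicz space and outside the optimal r.i.~domain, and verifying $\|f\|_X=\infty$ --- which runs through the Copson operator followed by the Marcinkiewicz functional of $M(Y)$ --- is where the real work lies. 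Taking $\alpha=\tfrac1{n'}$ recovers the result of~\cite{MuPiTa22} for Lipschitz domains, whose pivotal case $q=p_0$ concerns the Brezis--Wainger embedding~\eqref{E:general-bw}.
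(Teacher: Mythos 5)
Your proposal is correct and follows essentially the same route as the paper: reduction to the Marcinkiewicz endpoint $\exp L^{\frac{1}{1-m(1-\alpha)}}$ via Theorem~\ref{T:nonexistence-optimal-orlicz-on-level}, identification of $\varphi_X$ and hence $L(X)=L^{p_0}\log^{1-p_0}L$, and the very same power--logarithmic test function (your $\gamma$ is the paper's $-\beta$, with the identical admissible window). The only cosmetic difference is that you compute $\varphi_X$ through Theorem~\ref{T:fundamental-optimal-domain-hardy} combined with the argument of Corollary~\ref{C:no-need-for-condition}, whereas the paper first removes the supremum over equimeasurable rearrangements and then evaluates $\|\chi_{(0,a)}\|_{X}$ directly via Lemma~\ref{T: ri-norm-characteristic-function}; both are tools developed in the paper and yield the same fundamental function.
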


\begin{proof}
    By \cite[Lemma 3.7]{OpPi99}, for every $q \in [\tfrac{1}{1-m(1-\alpha)}, \infty]$, the spaces $L^{\infty,q,-1+(1-\alpha)m-\frac{1}{q}}$ and $\mathrm{exp} \,L^\frac{1}{1-m(1-\alpha)}$ share the same fundamental function $\varphi$, where
    \begin{equation*}
        \varphi(t) \approx \frac{1}{{(\log \frac{2}{t})}^{1-m(1-\alpha)}} \quad \mbox{for}~t \in (0,1),
    \end{equation*}
    and it holds that $L^{\infty,q,-1+(1-\alpha)m-\frac{1}{q}} \hookrightarrow \mathrm{exp} \,L^\frac{1}{1-m(1-\alpha)}$. Moreover, by the aforementioned lemma, it holds that $M(Z) = \mathrm{exp} \,L^\frac{1}{1-m(1-\alpha)}$ for any r.i.~space $Z$ with the fundamental function $\varphi$, and in fact, $L(Z) = \mathrm{exp} \,L^\frac{1}{1-m(1-\alpha)}$.

    We aim to prove that there is no largest Orlicz space $L^A$ such that the embedding \eqref{E:embedding-BW} holds for every $\Omega$ in $\mathcal{J}_\alpha$. Therefore, by Theorem \ref{T:nonexistence-optimal-orlicz-on-level}, it suffices to prove that there is no largest Orlicz space $L^A$ such that the embedding
    \begin{equation}\label{E:embedding-T}
        W^mL^A(\Omega) \hookrightarrow \mathrm{exp} \,L^\frac{1}{1-m(1-\alpha)}(\Omega)
    \end{equation}
    holds for every $\Omega$ in $\mathcal{J}_\alpha$. By Theorem \ref{T:principal-alternative-sobolev-embeddings} \ref{en:PA-sobolev-embeddings-domain}, it is therefore enough to prove that if $X$ is the optimal domain r.i.~space in
    \begin{equation*}
        W^mX(\Omega) \hookrightarrow \mathrm{exp} \,L^\frac{1}{1-m(1-\alpha)}(\Omega)
    \end{equation*}
    for every $\Omega$ in $\mathcal{J}_\alpha$, then $L(X)(0,1) \not\subset X(0,1)$.

    First, we find the optimal domain space $X(\Omega)$. We use Theorem \ref{T:optimal-ri-domain-mazya} and obtain the formula
    \begin{equation*}
        \|f\|_{X(0,1)} = \sup_{h \sim f} \left\Vert \int_t^1 s^{-1+m(1-\alpha)}h(s) \, \mathrm{d}s \right\Vert_{\mathrm{exp} \,L^\frac{1}{1-m(1-\alpha)}(0,1)} \quad \mbox{for}~f \in \mathcal{M}(0,1).
    \end{equation*}
    By Lemma \ref{T: supremum-operator}, applied to $\gamma = m(1-\alpha)$, the operator $T_{m(1-\alpha)}$ is bounded on $L^{1,1;1-m(1-\alpha)}$. Furthermore, by \cite[Theorem~6.11]{OpPi99} and \eqref{Orlicz-exp}, it holds that $(L^{1,1;1-m(1-\alpha)})'(0,1) = \exp L^{\frac{1}{1-m(1-\alpha)}}(0,1)$. Hence, it follows from \cite[Theorem 4.7]{EdMiMuPi19} that
    \begin{equation*}
        \|f\|_{X(0,1)} \approx  \left\Vert \int_t^1 s^{-1+m(1-\alpha)}f^*(s) \, \mathrm{d}s \right\Vert_{\mathrm{exp} \,L^\frac{1}{1-m(1-\alpha)}(0,1)}.
    \end{equation*}
    As such, we will with no loss of generality suppose that 
    \begin{equation*}
        \|f\|_{X(0,1)} = \left\Vert \int_t^1 s^{-1+m(1-\alpha)}f^*(s) \, \mathrm{d}s \right\Vert_{\mathrm{exp} \,L^\frac{1}{1-m(1-\alpha)}(0,1)}.
    \end{equation*}
    
    Next, we describe $L(X)$. For that purpose, we need to find $\varphi_X$ so we can determine its Young function $A$. Let $a \in (0,1)$. Then, as $\chi_{(0,a)}$ = $\chi_{(0,a)}^*$ and by Lemma \ref{T: ri-norm-characteristic-function}, we get
    \begin{equation*}
    \begin{split}
        \varphi_X(a) & = \|\chi_{(0,a)}\|_{X(0,1)} = \left\Vert \int_t^1 s^{-1+m(1-\alpha)}\chi_{(0,a)}(s) \, \mathrm{d}s \right\Vert_{\mathrm{exp} \,L^\frac{1}{1-m(1-\alpha)}(0,1)} \\
        & = \left\Vert \chi_{(0,a)}(t) \int_t^a s^{-1+m(1-\alpha)} \mathrm{d}s \right\Vert_{\mathrm{exp} \,L^\frac{1}{1-m(1-\alpha)}(0,1)} \\
        & \approx \left\Vert \chi_{(0,a)}(t) (a^{m(1-\alpha)} - t^{m(1-\alpha)}) \right\Vert_{\mathrm{exp} \,L^\frac{1}{1-m(1-\alpha)}(0,1)} \\
        & \approx \left\Vert \chi_{(0,a)} \right\Vert_{\mathrm{exp} \,L^\frac{1}{1-m(1-\alpha)}(0,1)} \cdot a^{m(1-\alpha)} = \frac{1}{(\log \frac{2}{a})^{1-m(1-\alpha)}} \cdot a^{m(1-\alpha)}.
    \end{split}
    \end{equation*}
    We therefore know that
        \begin{equation*}
            \varphi_X(a) \approx \frac{a^{m(1-\alpha)}}{(\log \frac{2}{a})^{1-m(1-\alpha)}} \quad \mbox{for}~a \in (0,1).
        \end{equation*}
Hence, by \eqref{Orlicz-fundamental_formula} and simple computation, it holds that 
\begin{equation*}
    L(X) = L^\frac{1}{m(1-\alpha)} \, \log L^{1-\frac{1}{m(1-\alpha)}}.
\end{equation*}

    Finally, we need to prove that $L(X) \not\subset X$. We know that $X$ is optimal in the embedding $W^mX \hookrightarrow \mathrm{exp} \, L^\frac{1}{1-m(1-\alpha)}$, and consequently, it suffices to prove that $W^mL(X) \not\hookrightarrow \mathrm{exp}\,L^\frac{1}{1-m(1-\alpha)}$. Then, by Theorem \ref{T:reduction-principle-mazya}, it is enough to prove that the inequality
    \begin{equation*}
        \left\Vert \int_t^1 s^{-1+m(1-\alpha)}f(s) \, \mathrm{d}s \right\Vert_{\mathrm{exp} \,L^\frac{1}{1-m(1-\alpha)}(0,1)} \lesssim \|f\|_{L(X)(0,1)} 
    \end{equation*}
    does not hold.
    Let $f(t) = t^{-m(1-\alpha)} (\log \tfrac{2}{t})^\beta$, where $\beta$ is anywhere in the interval $(-m(1-\alpha), 1-2m(1-\alpha))$. Owing to the assumption, the interval is non-empty. First, we prove that $f \in L(X)$, that is, $\|f\|_{L(X(0,1))}<\infty$. Since $A(t)$ satisfies the $\Delta_2$ condition, it holds that $f \in L^A \iff \int_0^1A(f) < \infty$. Furthermore,
    \begin{equation*}
    \begin{split}
        \int_0^1A(f) < \infty & \iff \int_0^1s^{-1}(\log \tfrac{2}{s})^{\beta\frac{1}{m(1-\alpha)}+1-\frac{1}{m(1-\alpha)}} \, \mathrm{d}s < \infty \\
        & \iff \beta < 1-2m(1-\alpha).
     \end{split}
    \end{equation*}
    Therefore, our choice of $\beta$ guarantees that $f \in L(X)$.
    Moreover, we know that 
    \begin{equation*}
        \|g\|_{\mathrm{exp} \,L^\frac{1}{1-m(1-\alpha)}(0,1)} \approx \sup_{t \in (0,1)} \frac{g^*(t)}{(\log \frac{2}{t})^{1-m(1-\alpha)}} \quad \mbox{for}~g \in \mathcal{M}(0,1),
    \end{equation*}
    thus it is sufficient to prove that
    \begin{equation}\label{E:not-in-space}
        \sup_{t \in (0,1)} \frac{\int_t^1 s^{-1+m(1-\alpha)}f(s) \, \mathrm{d}s}{(\log \frac{2}{t})^{1-m(1-\alpha)}} = \infty.
    \end{equation}
    We compute the integral and obtain
    \begin{equation*}
        \sup_{t \in (0,1)} \frac{\int_t^1 s^{-1+m(1-\alpha)}\cdot s^{-m(1-\alpha)}(\log \frac{2}{s})^\beta \, \mathrm{d}s}{(\log \frac{2}{t})^{1-m(1-\alpha)}} = \sup_{t \in (0,1)} \frac{\int_t^1 \frac{(\log \frac{2}{s})^\beta}{s}\, \mathrm{d}s}{(\log \frac{2}{t})^{1-m(1-\alpha)}}
    \end{equation*}
    \begin{equation*}
        \approx \sup_{t \in (0,1)} \frac{(\log \frac{2}{t})^{\beta + 1}}{(\log \frac{2}{t})^{1-m(1-\alpha)}} = \sup_{t \in (0,1)} (\log \tfrac{2}{t})^{\beta + m(1-\alpha)} = \infty
    \end{equation*}
    so the equality holds if $\beta > -m(1-\alpha)$. Hence, once again, our choice of $\beta$ yields~\eqref{E:not-in-space}. Therefore, we have found a function $f \in L(X)$ such that
    \begin{equation*}
        \left\Vert \int_t^1 s^{-1+m(1-\alpha)}f(s) \, \mathrm{d}s \right\Vert_{\mathrm{exp} \,L^\frac{1}{1-m(1-\alpha)}} = \infty.
    \end{equation*}
    Consequently, by Theorem \ref{T:reduction-principle-mazya} $L(X) \not\subset X$. By Theorem \ref{T:principal-alternative-sobolev-embeddings}, there is no largest Orlicz space which would render the embedding \eqref{E:embedding-T} true for every $\Omega$ in $\mathcal{J}_\alpha$. Finally, by Theorem \ref{T:nonexistence-optimal-orlicz-on-level}, there is no largest Orlicz space which would render the embedding \eqref{E:embedding-BW} true for every $\Omega$ in $\mathcal{J}_\alpha$.
\end{proof}

\begin{remark}
    In this work, we have focused primarily on Maz'ya classes of Euclidean domains. We are aware that there are plenty of open questions worth pursuing which we leave open. Pivotal examples are the case $\alpha = 1$ in Maz'ya domains, Gaussian--Sobolev embeddings, embeddings on domains endowed with Frostman-Ahlfors measures, or embeddings on probability spaces. The reason we do not consider these cases is that the corresponding integral operators get too complicated (e.g.~they take the form, at least for higher-order embeddings, of kernel-type operators). Hence such considerations reach beyond the scope of this text. We plan to study them in our following work.
\end{remark}

\bibliographystyle{plainnat}   
\renewcommand{\bibname}{Bibliography}
\bibliography{optimality-orlicz}

\openright
\end{document}